\documentclass[12pt,a4paper,english,reqno]{amsart}
\usepackage{amsthm}
\usepackage{amssymb}
\usepackage{amscd}
\usepackage{amstext}
\usepackage[all]{xy}
\xyoption{2cell}
\UseTwocells
\usepackage{graphicx}

\theoremstyle{plain}
\newtheorem{thm}{Theorem}[section]
\newtheorem{prp}[thm]{Proposition}
\newtheorem{lem}[thm]{Lemma}
\newtheorem{cor}[thm]{Corollary}
\newtheorem{clm}{Claim}

\newtheorem*{thm-nn}{Theorem}
\newtheorem*{prp-nn}{Proposition}
\newtheorem*{lem-nn}{Lemma}
\newtheorem*{cor-nn}{Corollary}
\newtheorem*{clm-nn}{Claim}
\newtheorem*{cnj-nn}{Conjecture}
\newtheorem*{prb-nn}{Problem}

\theoremstyle{definition}
\newtheorem{dfn}[thm]{Definition}
\newtheorem{exm}[thm]{Example}

\newtheorem*{dfn-nn}{Definition}

\newtheorem{rmk}[thm]{Remark}


\newcommand{\xyR}[1]{%
\xydef@\xymatrixrowsep@{#1}}

\newcommand{\xyC}[1]{%
\xydef@\xymatrixcolsep@{#1}}


%
%
\def\al{\alpha}
\def\be{\beta}
\def\ga{\gamma}
\def\de{\delta}
\def\ep{\varepsilon}
\def\ze{\zeta}
\def\et{\eta}
\def\th{\theta}

\def\la{\lambda}
\def\ro{\rho}
\def\si{\sigma}

\def\ph{\phi}
\def\ps{\psi}

\def\De{\Delta}

%
%

\def\Hom{\operatorname{Hom}}

\def\End{\operatorname{End}}

\def\rep{\operatorname{rep}}
\def\mod{\operatorname{mod}}
\def\Mod{\operatorname{Mod}}

\def\prj{\operatorname{prj}}

\def\Prj{\operatorname{Prj}}


\def\can{\operatorname{can}}
\def\Iso{\operatorname{Iso}}

%
%

\def\Kb{{\mathcal K}^{\text{\rm b}}}

\def\perf{\operatorname{per}}

\def\calA{{\mathcal A}}
\def\calB{{\mathcal B}}
\def\calC{{\mathcal C}}
\def\calD{{\mathcal D}}

\def\calK{{\mathcal K}}

\def\calT{{\mathcal T}}

%
%
\def\bbN{{\mathbb N}}
\def\bbZ{{\mathbb Z}}

%
%


%
%
\def\op{^{\mathrm{op}}} 
\def\inv{^{-1}}

%
%
\def\implies{\text{$\Rightarrow$}\ }
\def\impliedby{\text{$\Leftarrow$}\ }

\def\incl{\hookrightarrow}
\def\iso{\cong}

\def\ox{\otimes}

\def\Lox{\overset{\mathbf{L}}{\otimes}}

\def\ovl{\overline}

\def\dsm#1,#2..#3{\bigoplus_{{#1}={#2}}^{#3}}
\def\sm#1,#2..#3{\sum_{{#1}={#2}}^{#3}}

\def\id{1\kern-.25em{\text{{\rm l}}}} 
\def\smallid{1\kern-.15em{\text{{\rm l}}}}

\def\isoto{\ \raise.8ex\hbox{$^{\sim}$}\kern-.7em\hbox{$\to$}\ }

\def\ya#1{\xrightarrow{#1}}
\def\blank{\operatorname{-}}

\def\Ltimes{\overset{\mathbf{L}}{\otimes}}

\def\bg{%
\family{cmr}\size{20}{12pt}\selectfont}

\def\bigzerou{%
\smash{\lower1.7ex\hbox{\bg 0}}}

%
%
\def\repr[#1;#2;#3;#4;#5]{
\left(
\begin{matrix}#1\\#2\end{matrix}
#3
\begin{matrix}#4\\#5\end{matrix}
\right)}
%

\def\colim{\varinjlim}

\setlength{\textwidth}{155mm}
\setlength{\textheight}{219mm}
\setlength{\oddsidemargin}{3mm}
\setlength{\evensidemargin}{3mm}



\def\kCat{\Bbbk\text{-}\mathbf{Cat}}
\def\GCat{G\text{-}\mathbf{Cat}}

\def\kAb{\Bbbk\text{-}\mathbf{Ab}}
\def\kTri{\Bbbk\text{-}\mathbf{Tri}}

\def\bfC{\mathbf{C}}

\def\Fun{\operatorname{Fun}}
\def\Gr{\operatorname{Gr}}
\def\k{\Bbbk}
\def\Oplax{\overleftarrow{\operatorname{Oplax}}}

\def\com{\operatorname{com}}
\def\To{\Rightarrow}

\begin{document}

\title{Derived equivalences of actions of a category}

\author{Hideto Asashiba}

\begin{abstract}
Let $\Bbbk$ be a commutative ring and $I$ a category.
As a generalization of a $\Bbbk$-category with a (pseudo) action of  a group
we consider a family of $\Bbbk$-categories with a (pseudo, lax, or oplax) action of $I$,
namely an oplax functor from $I$ to the 2-category of small $\Bbbk$-categories.
We investigate derived equivalences of those oplax functors,
and establish a Morita type theorem for them.
This gives a base of investigations of derived equivalences of Grothendieck constructions
of those oplax functors.
\end{abstract}

\subjclass[2000]{18D05, 16W22, 16W50 }

\thanks{This work is partially supported by Grant-in-Aid for Scientific Research
(C) 21540036 from JSPS}

\maketitle

\section{Introduction}

We fix a category $I$ and a commutative ring $\Bbbk$ and
denote by $\kCat$ (resp.\ $\kAb$, $\kTri$)
the 2-category of small $\Bbbk$-categories
(resp.\ small abelian $\k$-categories, small triangulated $\k$-categories).
For a $\k$-category $\calC$ a (right) $\calC$-{\em module} is a contravariant functor from
$\calC$ to the category $\Mod \k$ of $\k$-modules,
and we denote by $\Mod \calC$ (resp.\ $\Prj \calC$, $\prj \calC$) the category of $\calC$-modules
(resp.\ projective $\calC$-modules, finitely generated projective $\calC$-modules).

A $\k$-category $\calC$ with an action of a group $G$ have been well investigated
in connection with a so-called covering technique in representation theory of algebras
(see e.g., \cite{Gab}).
The orbit category $\calC/G$ and the canonical functor $\calC \to \calC/G$ 
are naturally constructed from these data, and one studied relationships
between $\Mod \calC$ and $\Mod \calC/G$.
We brought this point of view to the derived equivalence classification problem
of algebras in \cite{Asa97}, and a main tool obtained there
was fully used in the derived equivalence classifications
in \cite{Asa99, Asa02}.  The main tool was extended in \cite{Asa11} in
the following form:

\begin{thm}
Let $G$ be a group acting on categories $\calC$ and $\calC'$.
Assume the condition
\begin{itemize}
\item[$(*)$]
There exists a $G$-stable tilting subcategory $E$ of $\Kb(\prj \calC)$
such that there is a $G$-equivariant equivalence $\calC' \to E$.
\end{itemize}
Then the orbit categories $\calC/G$ and $\calC'/G$ are derived equivalent.
\end{thm}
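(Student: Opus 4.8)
The plan is to reduce the statement about orbit categories to a statement about the action level, where Rickard-type machinery applies. First I would recall that a $G$-action on $\calC$ extends canonically to a $G$-action on the homotopy category $\Kb(\prj\calC)$ (by functoriality of the constructions $\prj(\blank)$ and $\Kb(\blank)$), and similarly for $\calC'$. The hypothesis $(*)$ gives a $G$-stable full subcategory $E\subseteq\Kb(\prj\calC)$ together with a $G$-equivariant equivalence $\calC'\xrightarrow{\sim}E$. Since $E$ is $G$-stable, the $G$-action on $\Kb(\prj\calC)$ restricts to $E$, and the equivalence transports it to the given $G$-action on $\calC'$; thus $\calC'$ is $G$-equivariantly equivalent to a tilting subcategory of $\Kb(\prj\calC)$.

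Next I would pass to orbit categories and exploit their compatibility with these constructions. The key point is that forming the orbit category is (sufficiently) functorial and commutes with the relevant operations up to equivalence: there is a canonical equivalence relating $\Kb(\prj\calC)/G$ and $\Kb(\prj(\calC/G))$ — more precisely, one has a fully faithful functor $\calC/G\to (\Kb(\prj\calC))/G$ whose image consists of objects that generate $\Kb(\prj(\calC/G))$ as a thick subcategory, using that the canonical functor $\calC\to\calC/G$ induces a functor on the bounded homotopy categories of finitely generated projectives. Under this identification, the $G$-stable tilting subcategory $E$ descends to a full subcategory $E/G$ of (a category equivalent to) $\Kb(\prj(\calC/G))$. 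One then checks that $E/G$ remains a tilting subcategory there: stability under shifts and the vanishing of negative self-extensions are inherited because $\Hom$-spaces in the orbit category are obtained by summing $\Hom$-spaces in $\calC$ over the $G$-orbit, and the generating condition descends as well since $E$ generates $\Kb(\prj\calC)$ and the descent functor is compatible with thick closures.

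Finally, the $G$-equivariant equivalence $\calC'\xrightarrow{\sim}E$ induces an equivalence $\calC'/G\xrightarrow{\sim}E/G$ between the orbit categories. Composing with the identification of $E/G$ as a tilting subcategory of $\Kb(\prj(\calC/G))$, we obtain that $\calC'/G$ is equivalent to a tilting subcategory of $\Kb(\prj(\calC/G))$; by Rickard's Morita theory for derived equivalences (a tilting subcategory detects derived equivalence), this yields a triangle equivalence $\Db(\Mod\calC/G)\xrightarrow{\sim}\Db(\Mod\calC'/G)$, i.e.\ $\calC/G$ and $\calC'/G$ are derived equivalent.

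The main obstacle, and the step requiring genuine care, is the compatibility of orbit categories with the passage to $\Kb(\prj(\blank))$: the orbit category construction does not commute with these constructions on the nose, so one must set up a precise comparison functor $\calC/G\to(\Kb(\prj\calC))/G$ and verify both its full faithfulness and that its image generates the bounded homotopy category of finitely generated projectives over $\calC/G$ as a thick subcategory — this is where the tilting hypothesis and the $G$-stability are really used, and where the argument in \cite{Asa11} presumably concentrates its technical effort.
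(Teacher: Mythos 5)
A point of orientation first: the paper you are reading does not prove this theorem. It is quoted in the introduction as the main result of the earlier paper \cite{Asa11}, and the body of the present paper is devoted to something else, namely generalizing the condition $(*)$ itself --- replacing the group action by an oplax functor $X\colon I\to\kCat$ and proving the Morita-type characterization (Theorem \ref{mainthm1}) at the level of the diagrams $X$, $X'$, \emph{before} any orbit/Grothendieck construction is taken; the descent to orbit categories, which is where your proposal concentrates, is explicitly deferred to \cite{Asa}. So there is no proof here to compare yours against, only the strategy of \cite{Asa11}.

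Measured against that, your outline is the standard and essentially correct one: lift the action to $\Kb(\prj\calC)$, push the $G$-stable tilting subcategory $E$ down to $\Kb(\prj(\calC/G))$, check that the image is again tilting with endomorphism category $\calC'/G$, and conclude by Keller's Morita theorem for $\k$-categories. But as written it is a plan rather than a proof: the entire technical content sits in the step you flag at the end. Concretely, what is needed is the push-down adjunction $F_\lambda\dashv F^\lambda$ between $\Mod\calC$ and $\Mod(\calC/G)$ and the isomorphism $\Hom_{\calC/G}(F_\lambda M,F_\lambda N)\cong\bigoplus_{g\in G}\Hom_{\calC}(M,{}^{g}N)$ for $M$ finitely generated, extended degreewise to $\Kb(\prj)$; this is what makes ``$\Hom$-spaces in the orbit category are sums over the orbit'' precise, and it is exactly where the $G$-stability of $E$ enters (so that ${}^{g}V$ stays in $E$ and all summands with $n\neq 0$ vanish by the tilting condition on the whole subcategory $E$, not merely on individual objects). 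Without this formula neither the full faithfulness of $\calC'/G\to\Kb(\prj(\calC/G))$ nor the vanishing of negative self-extensions of the pushed-down subcategory is established. Two further points you elide: for non-free actions the orbit category and the push-down formula require a more careful set-up (a substantial part of \cite{Asa11}); and the final appeal to ``Rickard's Morita theory'' is really Keller's theorem \cite{Ke1} for $\k$-categories, which in general needs a projectivity or flatness hypothesis over $\k$ --- compare the hypothesis that $X'$ be $\k$-projective in part (c) of Theorem \ref{mainthm1} and the field hypothesis in its corollaries.
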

(In the above, $E$ is said to be $G$-{\em stable} if the set of objects in $E$ is stable
under the $G$-action on $\Kb(\prj \calC)$ induced from that on $\calC$.)
Observe that if we regard $G$ as a category with a single
object $*$, then a $G$-action on a category $\calC$ is nothing
but a functor $X : G \to \kCat$ with $X(*)=\calC$; and
the orbit category $\calC/G$ coincides with (the $\k$-linear version of) the Grothendieck
construction $\Gr(X)$ of $X$ defined in \cite{Groth}.

In a subsequent paper \cite{Asa} we will generalize this theorem to
an arbitrary category $I$ and to any {\em oplax functors} $X, X' \colon I \to \kCat$
(roughly speaking an oplax functor $X$ is a family $(X(i))_{i\in I_0}$ of $\k$-categories indexed by
the objects $i$ of $I$ with an action of $I$,
the precise definition is given in Definition \ref{dfn:oplax-fun}).
In this paper before doing it we first investigate the meaning of the condition $(*)$.
Recall the following theorem due to Rickard \cite{Rick}:

\begin{thm}\label{Rickard-thm}
For rings $R$ and $S$ the following are equivalent:
\begin{enumerate}
\item $R$ and $S$ are derived equivalent.
\item There exists a tilting complex $T$ in $\Kb(\prj R)$ such that $\End_R(T)$ is isomorphic to $S$.
\end{enumerate}
\end{thm}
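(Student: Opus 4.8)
The plan is to prove the two implications separately; $(1)\Rightarrow(2)$ is essentially formal, and the whole weight of the statement lies in $(2)\Rightarrow(1)$. Throughout, ``derived equivalent'' will mean that the bounded derived categories $\Db(\Mod R)$ and $\Db(\Mod S)$ are triangle equivalent. For $(1)\Rightarrow(2)$: let $F\colon\Db(\Mod S)\to\Db(\Mod R)$ be a triangle equivalence and put $T:=F(S)$, where $S$ is viewed as a stalk complex in degree $0$. Since $F$ preserves compact objects and the compact objects of $\Db(\Mod R)$ are, up to isomorphism, exactly those of $\Kb(\prj R)$ (which is idempotent complete), we may take $T$ to be an honest bounded complex of finitely generated projective $R$-modules. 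As $S$ is a projective $S$-module, $\Hom_{\Db(\Mod S)}(S,S[n])\iso H^n(S)=0$ for $n\neq0$, and $F$ transports this vanishing to $\Hom_{\Kb(\prj R)}(T,T[n])=0$ for $n\neq0$. Moreover $F$ carries the thick subcategory $\thick_{\Db(\Mod S)}(S)=\Kb(\prj S)$ onto $\thick_{\Db(\Mod R)}(T)$, whence $\thick_{\Db(\Mod R)}(T)=\Kb(\prj R)$; thus $T$ is a tilting complex, and the ring isomorphism induced by $F$ on endomorphism rings gives $\End_R(T)=\End_{\Db(\Mod R)}(T)\iso\End_{\Db(\Mod S)}(S)=S$.

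For $(2)\Rightarrow(1)$ I would argue via differential graded algebras. Since $T$ is a bounded complex of projectives it is homotopically projective, so its DG endomorphism algebra $\calE:=\End^{\bullet}_R(T)$ computes derived Hom: $H^n(\calE)\iso\Hom_{\Db(\Mod R)}(T,T[n])$ for all $n$. By the two tilting hypotheses this cohomology is concentrated in degree $0$, where it equals $\End_R(T)\iso S$. A DG algebra whose cohomology lives in a single degree is intrinsically formal: the canonical truncation $\tau_{\le0}\calE$ is a sub-DG-algebra that maps by quasi-isomorphisms both into $\calE$ and onto $H^0(\calE)\iso S$, so $\calE$ and $S$ are joined by a zig-zag of DG-algebra quasi-isomorphisms, and hence $\calD(\calE)\simeq\calD(\Mod S)$. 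On the other hand $T$ lies in $\Kb(\prj R)=\thick_{\Db(\Mod R)}(R)$, so $R\in\thick(T)$; since $R$ is a compact generator of the unbounded derived category $\calD(\Mod R)$, this forces $T$ to be a compact generator of $\calD(\Mod R)$ as well. Keller's recognition theorem then makes $\operatorname{RHom}_R(T,-)$ into a triangle equivalence $\calD(\Mod R)\simeq\calD(\calE)$. Composing the two equivalences gives $\calD(\Mod R)\simeq\calD(\Mod S)$; such an equivalence preserves compactness, hence restricts to $\Kb(\prj R)\simeq\Kb(\prj S)$, and one checks it restricts further to $\Db(\Mod R)\simeq\Db(\Mod S)$, so $R$ and $S$ are derived equivalent.

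The serious obstacle is $(2)\Rightarrow(1)$, and more precisely the honest \emph{construction} of the equivalence. The DG route sketched above presupposes the full apparatus of DG categories and of derived categories of DG modules, together with Keller's theorem identifying an algebraic triangulated category having a compact generator $T$ with the derived category of $\operatorname{REnd}(T)$; one must also take some care in descending from the unbounded derived category to $\Db$ and to the category of perfect complexes. An alternative, closer to Rickard's original treatment, is to promote $T$ to a complex of $S$-$R$-bimodules carrying a left action of $S=\End_R(T)$ compatible with the right $R$-action --- a two-sided tilting complex --- and to show directly that $-\Lox_{S}T$ is an equivalence $\Db(\Mod S)\to\Db(\Mod R)$. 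This sidesteps DG language but replaces it with a delicate lifting of the algebra structure on $\End_R(T)$ through chain homotopies, which is where the real difficulty resides.
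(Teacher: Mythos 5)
The paper offers no proof of this statement: it is recalled verbatim as Rickard's theorem and cited to \cite{Rick}, so there is no internal argument to compare yours against. On its own terms your sketch is correct and follows the standard modern route. The implication $(1)\Rightarrow(2)$ is the formal transport argument everyone uses, and your $(2)\Rightarrow(1)$ via the DG endomorphism algebra $\calE=\End^\bullet_R(T)$, the truncation zig-zag $\calE\leftarrow\tau_{\le 0}\calE\to H^0\calE\iso S$, and Keller's recognition theorem is essentially Keller's proof of Rickard's theorem; it is worth noting that this is exactly the machinery (truncated DG endomorphism categories and the lifting of $H^0$-level data through quasi-isomorphisms) that the paper itself deploys in its appendix and in the proof of the implication $(3)\Rightarrow(1)$ of Theorem \ref{mainthm1}, so your route is the one most consonant with the paper's methods, whereas Rickard's original argument proceeds instead through two-sided tilting complexes as you observe at the end. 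Two points deserve more care. First, in $(1)\Rightarrow(2)$ the phrase ``the compact objects of $\Db(\Mod R)$'' is not quite meaningful: $\Db(\Mod R)$ has no arbitrary coproducts, so compactness in the categorical sense does not apply there; the correct device is Rickard's intrinsic characterization of $\Kb(\prj R)$ inside $\Db(\Mod R)$ as the homologically finite objects (those $X$ with $\Hom(X,Y[i])=0$ for all but finitely many $i$, for every $Y$), which is visibly preserved by any triangle equivalence. Second, the final descent from $\calD(\Mod R)\simeq\calD(\Mod S)$ to the bounded level is asserted rather than argued; it does follow, since boundedness of cohomology is detected by the compact generator $T$ via $\Hom(T,X[n])=0$ for $|n|\gg 0$, but that half-sentence should be made explicit. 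Neither point is a genuine gap, only a place where the justification as written would not survive scrutiny.
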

Then the condition $(*)$ can be regarded as a generalized version of the condition (2).
Therefore in this paper we first give a definition of derived equivalences of oplax functors
and generalize the theorem above in the setting of oplax functors.

Recall also that if $\calC$ is a category with an action of a group $G$, then
the module category $\Mod \calC$ (resp.\ the derived category $\calD(\Mod \calC)$)
has the induced $G$-action; thus both of them are again categories with $G$-actions.
Hence for an oplax functor $X$ the ``module category'' $\Mod X$
(resp.\ the ``derived category'' $\calD(\Mod X)$) should again be a family of categories with
an $I$-action, i.e., an oplax functor from $I$ to $\kAb$ (resp.\ to $\kTri$).
An oplax functor $\Kb(\prj X)$ is also defined as an oplax subfunctor of $\calD(\Mod X)$
by the family $(\Kb(\prj X(i)))_{i\in I_0}$, which plays the same role as $\Kb(\prj R)$
in Theorem \ref{Rickard-thm}.

We need a notion of equivalences between oplax functors for two purposes:
\begin{enumerate}
\item[(a)] to generalize the statement  $(*)$; and
\item[(b)] to define a derived equivalence of oplax functors $X$, $X'$ by an existence of
an equivalence between oplax functors  $\calD(\Mod X)$ and $\calD(\Mod X')$.
\end{enumerate}
To define equivalences of objects we need notions of 1-morphisms and 2-morphisms,
thus we need a 2-categorical structure on the collection of oplax functors.
We will define a 2-category $\Oplax(I, \bfC)$ of all oplax functors
from $I$ to a 2-category $\bfC$, which can be used for both (a) and (b) of the above.
We have the following as a corollary of our main theorem (see Theorem \ref{mainthm1} for detail),
which generalizes Theorem \ref{Rickard-thm} in the field case.
\begin{thm}
Let $X, X' \in \Oplax(I, \kCat)$.
Assume that $\k$ is a field.
Then the following are equivalent.
\begin{enumerate}
\item
$X$ and $X'$ are derived equivalent.
\item
There exists a tilting oplax functor $\calT$ for $X$
such that $\calT$ and $X'$ are equivalent in $\Oplax(I, \kCat)$.
\end{enumerate}
\end{thm}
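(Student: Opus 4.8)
The plan is to mimic Rickard's classical argument, but transported to the 2-category $\Oplax(I,\kCat)$, using the oplax functors $\calD(\Mod X)$ and $\Kb(\prj X)$ in place of the derived category and the homotopy category of projectives of a single ring. I would first set up the implication (2)$\Rightarrow$(1), which should be the softer direction. Suppose $\calT$ is a tilting oplax functor for $X$ equivalent to $X'$ in $\Oplax(I,\kCat)$. By definition of ``tilting oplax functor'' $\calT$ is an oplax subfunctor of $\Kb(\prj X)$ whose value $\calT(i)$ at each object $i$ is a tilting subcategory of $\Kb(\prj X(i))$, compatibly with the structure morphisms $X(a)$ for $a\in I_1$. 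Rickard's theorem (Theorem~\ref{Rickard-thm}), or rather its additive-category form, gives for each $i$ a triangle equivalence $\calD(\Mod \calT(i)) \simeq \calD(\Mod X(i))$ sending $\calT(i)$ to $\Kb(\prj X(i))$; the point is to check that these assemble into a 1-morphism of oplax functors $\calD(\Mod \calT)\To\calD(\Mod X)$ that is an equivalence in $\Oplax(I,\kTri)$. This requires producing the coherence 2-cells from the tilting data and verifying the oplax-naturality axioms; the naturality of Rickard's equivalence (it is induced by $\Kb(\prj{-})$-tensoring with the tilting complex) should make the 2-cells essentially forced. Composing with the equivalence $\calD(\Mod \calT)\simeq\calD(\Mod X')$ coming from $\calT\simeq X'$ in $\Oplax(I,\kCat)$ (here one needs that $\calD(\Mod{-})$ is a 2-functor on $\Oplax(I,\kCat)$, which is part of the general setup) yields a derived equivalence $X\simeq X'$.

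For (1)$\Rightarrow$(2), assume $\calD(\Mod X)$ and $\calD(\Mod X')$ are equivalent in $\Oplax(I,\kTri)$ via a 1-morphism $F$ with coherence 2-cells. I would define $\calT$ objectwise by $\calT(i):=F(i)\bigl(\Kb(\prj X'(i))\bigr)$, or more precisely the full subcategory of $\calD(\Mod X(i))$ on the images of the indecomposable projectives of $X'(i)$. By Rickard's theorem applied at each $i$, $\calT(i)$ is (equivalent to) a tilting subcategory of $\Kb(\prj X(i))$, and $F(i)$ restricts to an equivalence $\Kb(\prj X'(i))\to\calT(i)$. The work is then to show (a) that the family $(\calT(i))_i$ is an oplax subfunctor of $\Kb(\prj X)$ — i.e.\ that the structure morphisms $X(a)$ of $\Kb(\prj X)$ carry $\calT$ into itself up to the coherence isomorphisms — which follows by transporting the oplax structure of $\Kb(\prj X')$ across $F$ using the 2-cells of $F$; and (b) that the restricted equivalences $F(i)|\colon \Kb(\prj X'(i))\to\calT(i)$ are the components of a 1-morphism $X'\To\calT$ in $\Oplax(I,\kCat)$ that is an equivalence. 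For (b) one takes the 2-cells of $X'\To\calT$ to be the restrictions of the 2-cells of $F$; checking they land in $\calT$ and satisfy the oplax axioms is a diagram chase using that $F$ is a 1-morphism.

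The main obstacle, in both directions, is bookkeeping of the oplax coherence data: an oplax functor carries not just objects and structure 1-morphisms $X(a)$ but 2-cells $X_{b,a}\colon X(b)X(a)\To X(ba)$ and $X_i\colon 1\To X(1_i)$ satisfying pentagon/triangle-type axioms (Definition~\ref{dfn:oplax-fun}), and a 1-morphism between oplax functors carries a further layer of 2-cells with their own compatibilities. So the real content is to verify that the objectwise Rickard equivalences are \emph{natural} in the strong sense required by $\Oplax(I,\kCat)$ and $\Oplax(I,\kTri)$ — that every square of structure morphisms, after applying the relevant derived/homotopy 2-functor, commutes up to the prescribed 2-cell. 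I expect this to reduce to the fact that Rickard's equivalence is implemented by a tensor functor and is therefore compatible with restriction of scalars along the structure morphisms, but making this precise in the oplax (as opposed to strict or pseudo) setting — where the structure ``morphisms'' need not even be functors in a naive sense but are themselves data of the ambient 2-category — is where the care is needed. A secondary, more technical point is the passage between a tilting \emph{complex} and a tilting \emph{subcategory}: one must fix, once and for all, that ``tilting oplax functor'' is formulated via subcategories of $\Kb(\prj X)$ so that the equivalence in (2) is literally an equivalence in $\Oplax(I,\kCat)$, and that the field hypothesis on $\k$ is what licenses the use of Rickard's theorem pointwise (so that $\End$ computations behave and tilting subcategories of $\Kb(\prj X(i))$ correspond to $\k$-categories derived equivalent to $X(i)$).
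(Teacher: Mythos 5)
Your direction (1) $\Rightarrow$ (2) is essentially the paper's: pass from the equivalence $\calD(\Mod X)\simeq\calD(\Mod X')$ to the compact objects $\perf X\simeq\perf X'$, hence to an equivalence $\Kb(\prj X')\to\Kb(\prj X)$, take $\calT(i)$ to be the full subcategory on the images of the representables (Yoneda embedding) of $X'(i)$, and extend the family $(\calT(i))_i$ to an oplax subfunctor with an $I$-equivariant inclusion. That part of your proposal is sound.

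The gap is in (2) $\Rightarrow$ (1), precisely at the point you describe as ``should make the 2-cells essentially forced.'' Rickard's theorem for a single tilting subcategory $\calT(i)\subseteq\Kb(\prj X(i))$ does \emph{not} hand you an equivalence implemented by a tensor functor: a one-sided tilting complex carries no canonical bimodule structure, and the objectwise equivalences it produces are defined only up to non-unique isomorphism, so there is nothing to ``transport'' along the structure maps. One must first replace the composite $X'(i)\to\calT(i)\hookrightarrow\Kb(\prj X(i))$ by a genuine functor $B_i\colon X'(i)\to\calC^-(\Prj X(i))$ (i.e., a bimodule complex), then construct the 2-cells $\ps(a)\colon B_i\ox_{X(i)}\ovl{X(a)}\To\ovl{X'(a)}\ox_{X'(j)}B_j$, and finally verify the two oplax coherence diagrams. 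The paper does all of this with Keller's lifting theorem (Theorem~\ref{keller-lifting}): part (a) produces $B_i$ together with a quasi-isomorphism $q_i$, and part (b) --- existence \emph{and} uniqueness up to homotopy of the completing morphism --- is what both yields $\ps(a)$ and forces the coherence diagrams to commute. This is a substantial extra input (the entire appendix), not bookkeeping, and without it your argument does not close. Two secondary points: the field hypothesis is used to guarantee that $X'$ is $\k$-projective, which is the hypothesis of the lifting theorem, not to ``license Rickard's theorem pointwise''; and the 2-functoriality of $\calD(\Mod -)$ on $\Oplax(I,\kCat)$ that you invoke to compose with $\calT\simeq X'$ is not established in this framework --- the paper sidesteps it by lifting the composite $X'\to\calT\hookrightarrow\Kb(\prj X)$ directly.
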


The paper is organized as follows.
In section 2 we define a 2-category $\Oplax(I, \bfC)$ of the oplax functors
from $I$ to a 2-category $\bfC$.
In section 3 we define the ``module category'' $\Mod X$ of an oplax functor
$X \colon I \to \kCat$ as an oplax functor $I \to \kAb$.
In section 4 we define the ``derived category'' $\calD(\Mod X)$ of the oplax functor $\Mod X$
as an oplax functor $I \to \kTri$.
The constructions of oplax functors $\Mod X$ and $\calD(\Mod X)$
for an oplax functor $X \colon I \to \kCat$ in sections 4 and 5 will be unified in
the subsequent paper \cite{Asa}.
In section 5 we state and prove our main result, which gives
a characterization of derived equivalences of oplax functors
by tilting oplax subfunctors.
In section 6 as an appendix we include Keller's proof of a categorical version
of the lifting theorem in \cite{Ke2}, which is used in the proof of the main result in section 5.

\section*{Acknowledgments}
Most parts of this work was done during my stay in Bielefeld
in February and September, 2010; and some final parts
in Bonn in September, 2010.
The results were announced at the seminars in the Universities
of Bielefeld, Bonn, Paris 7, and in Beijing Normal University. 
I would like to thank Claus M.\ Ringel, Henning Krause, and
Jan Schr{\"o}er for their hospitality and nice discussions.
I would especially like to thank Bernhard Keller for giving me
a helpful suggestion of the proof of the implication
(3) $\implies$(1) in Theorem \ref{mainthm1}, and for his kindness to let me
include his proof of a generalization of his lifting theorem
to categories as an appendix.

\section{The 2-category of oplax functors}
The 2-category $\GCat$ of $\k$-categories with $G$-actions for a group $G$
was generalized by D.~Tamaki in \cite{Tam} to the 2-category $\Oplax(I, \bfC)$
of oplax functors from a category $I$ to a 2-category $\bfC$
of $V$-enriched categories for a monoidal category $V$, which we use in this paper for
$V =\Mod\k$, the category of $\k$-modules.
We refer the reader to Street \cite{Str72} for the original definition of lax functors.

Throughout this section $\bfC$ is a 2-category.

\begin{dfn}
\label{dfn:oplax-fun}
(1) An {\em oplax functor} from $I$ to $\bfC$ is a triple
$(X, \et, \th)$ of data:
\begin{itemize}
\item
a quiver morphism $X\colon I \to \bfC$, where $I$ and $\bfC$ are regarded as quivers
by forgetting additional data such as 2-morphisms or compositions;
\item
a family $\et:=(\et_i)_{i\in I_0}$ of 2-morphisms $\et_i\colon X(\id_i) \Rightarrow \id_{X(i)}$ in $\bfC$
indexed by $i\in I_0$; and
\item
a family $\th:=(\th_{b,a})_{(b,a)}$ of 2-morphisms
$\th_{b,a} \colon X(ba) \Rightarrow X(b)X(a)$
in $\bfC$ indexed by $(b,a) \in \com(I):=
\{(b,a)\in I_1 \times I_1 \mid ba \text{ is defined}\}$
\end{itemize}
satisfying the axioms:

\begin{enumerate}
\item[(a)]
For each $a\colon i \to j$ in $I$ the following are commutative:
$$
\vcenter{
\xymatrix{
X(a\id_i) \ar@{=>}[r]^(.43){\th_{a,\id_i}} \ar@{=}[rd]& X(a)X(\id_i)
\ar@{=>}[d]^{X(a)\et_i}\\
& X(a)\id_{X(i)}
}}
\qquad\text{and}\qquad
\vcenter{
\xymatrix{
X(\id_j a) \ar@{=>}[r]^(.43){\th_{\id_j,a}} \ar@{=}[rd]& X(\id_j)X(a)
\ar@{=>}[d]^{\et_jX(a)}\\
& \id_{X(j)}X(a)
}}\quad;\text{ and}
$$
\item[(b)]
For each $i \ya{a}j \ya{b} k \ya{c} l$ in $I$ the following is commutative: 
$$
\xymatrix@C=3em{
X(cba) \ar@{=>}[r]^(.43){\th_{c,ba}} \ar@{=>}[d]_{\th_{cb,a}}& X(c)X(ba)
\ar@{=>}[d]^{X(c)\th_{b,a}}\\
X(cb)X(a) \ar@{=>}[r]_(.45){\th_{c,b}X(a)}& X(c)X(b)X(a).
}
$$
\end{enumerate}

(2) A {\em lax functor} from $I$ to $\bfC$ is an oplax functor
from $I$ to $\bfC^{\text{co}}$, where $\bfC^{\text{co}}$ denotes the 2-category obtained from
$\bfC$ by reversing the 2-morphisms.

(3) A {\em pseudofunctor} from $I$ to $\bfC$ is an oplax functor $(X, \et, \th)$ with
all $\et_i$ and $\th_{b,a}$ 2-isomorphisms.
\end{dfn}

\begin{rmk}
A functor from $I$ to 
$\bfC$ is an oplax functor $(X, \et, \th)$ with
all $\et_i$ and $\th_{b,a}$ identities.

\end{rmk}

\begin{dfn}
Let 
$X = (X, \et, \th)$, $X'= (X', \et', \th')$
be oplax functors from $I$ to $\bfC$.
A {\em $1$-morphism} (called a {\em left transformation}) from $X$ to $X'$
is a pair $(F, \ps)$ of data
\begin{itemize}
\item
a family $F:=(F(i))_{i\in I_0}$ of 1-morphisms $F(i)\colon X(i) \to X'(i)$
in $\bfC$ indexed by $i \in I_0$; and
\item
a family $\ps:=(\ps(a))_{a\in I_1}$ of 2-morphisms
$\ps(a)\colon X'(a)F(i) \Rightarrow F(j)X(a)$ in $\bfC$
indexed by $a\colon i \to j$ in $I_1$:
$$
\xymatrix{
X(i) & X'(i)\\
X(j) & X'(j)
\ar_{X(a)} "1,1"; "2,1"
\ar^{X'(a)} "1,2"; "2,2"
\ar^{F(i)} "1,1"; "1,2"
\ar_{F(j)} "2,1"; "2,2"
\ar@{=>}_{\ps(a)} "1,2"; "2,1"
}
$$
\end{itemize}
satisfying the axioms
\begin{enumerate}
\item[(a)]
For each $i \in I_0$ the following is commutative:
$$
\vcenter{
\xymatrix{
X'(\id_i)F(i) & F(i)X(\id_i)\\
\id_{X'(i)}F(i) & F(i)\id_{X(i)}
\ar@{=>}^{\ps(\smallid_i)} "1,1"; "1,2"
\ar@{=} "2,1"; "2,2"
\ar@{=>}_{\et'_iF(i)} "1,1"; "2,1"
\ar@{=>}^{F(i)\et_i} "1,2"; "2,2"
}}\quad;\text{ and}
$$
\item[(b)]
For each $i \ya{a} j \ya{b} k$ in $I$ the following is commutative:
$$
\xymatrix@C=4pc{
X'(ba)F(i) & X'(b)X'(a)F(i) & X'(b)F(j)X(a)\\
F(k)X(ba) & & F(k)X(b)X(a).
\ar@{=>}^{\th'_{b,a}F(i)} "1,1"; "1,2"
\ar@{=>}^{X'(b)\ps(a)} "1,2"; "1,3"
\ar@{=>}_{F(k)\,\th_{b,a}} "2,1"; "2,3"
\ar@{=>}_{\ps(ba)} "1,1"; "2,1"
\ar@{=>}^{\ps(b)X(a)} "1,3"; "2,3"
}
$$
\end{enumerate}
\end{dfn}

\begin{dfn}
Let 
$X = (X, \et, \th)$, $X'= (X', \et', \th')$
be oplax functors from $I$ to $\bfC$, and
$(F, \ps)$, $(F', \ps')$ 1-morphisms from $X$ to $X'$.
A {2-morphism} from $(F, \ps)$ to $(F', \ps')$ is a
family $\ze= (\ze(i))_{i\in I_0}$ of 2-morphisms
$\ze(i)\colon F(i) \Rightarrow F'(i)$ in $\bfC$
indexed by $i \in I_0$
such that the following is commutative for each $a\colon i \to j$ in $I$:
$$
\xymatrix@C=4pc{
X'(a)F(i) & X'(a)F'(i)\\
F(j)X(a) & F'(j)X(a).
\ar@{=>}^{X'(a)\ze(i)} "1,1"; "1,2"
\ar@{=>}^{\ze(j)X(a)} "2,1"; "2,2"
\ar@{=>}_{\ps(a)} "1,1"; "2,1"
\ar@{=>}^{\ps'(a)} "1,2"; "2,2"
}
$$
\end{dfn}

\begin{dfn}
Let 
$X = (X, \et, \th)$, $X'= (X', \et', \th')$
and $X''= (X'', \et'', \th'')$
be oplax functors from $I$ to $\bfC$, and
let $(F, \ps)\colon X \to X'$, $(F', \ps')\colon X' \to X''$
be 1-morphisms.
Then the composite $(F', \ps')(F, \ps)$ of $(F, \ps)$ and
$(F', \ps')$ is a 1-morphism from $X$ to $X''$ defined by
$$
(F', \ps')(F, \ps):= (F'F, \ps'\circ\ps),
$$
where $F'F:=((F'(i)F(i))_{i\in I_0}$ and for each $a\colon i \to j$ in $I$,
$
(\ps'\circ\ps)(a):= F'(j)\ps(a)\circ \ps'(a)F(i)
$
is the pasting of the diagram
$$
\xymatrix@C=4pc{
X(i) & X'(i) & X''(i)\\
X(j) & X'(j) & X''(j).
\ar_{X(a)} "1,1"; "2,1"
\ar_{X'(a)} "1,2"; "2,2"
\ar^{F(i)} "1,1"; "1,2"
\ar_{F(j)} "2,1"; "2,2"
\ar@{=>}_{\ps(a)} "1,2"; "2,1"
\ar^{X''(a)} "1,3"; "2,3"
\ar^{F'(i)} "1,2"; "1,3"
\ar_{F'(j)} "2,2"; "2,3"
\ar@{=>}_{\ps'(a)} "1,3"; "2,2"
}
$$
\end{dfn}

The following is straightforward to verify.

\begin{prp}
Oplax functors $I \to \bfC$,
$1$-morphisms between them, and $2$-morphisms between
$1$-morphisms $($defined above$)$ define a $2$-category,
which we denote by $\Oplax(I, \bfC)$.
\end{prp}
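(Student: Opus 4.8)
The plan is to verify directly that the data of Definitions above assemble into a 2-category, checking the axioms in the order: (i) identity 1-morphisms, (ii) associativity and unitality of horizontal composition of 1-morphisms, (iii) vertical composition of 2-morphisms with its unit and associativity, (iv) horizontal composition of 2-morphisms (whiskering on both sides) and the interchange law. Since everything is defined pointwise over the objects $i \in I_0$ together with a coherence 2-cell for each arrow $a \in I_1$, each check reduces to a check in the 2-category $\bfC$, where the corresponding axioms already hold. So the proof is a matter of bookkeeping: one writes down the candidate structure maps and pastes diagrams.

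First I would define the identity 1-morphism $\id_X = (F, \ps)$ on an oplax functor $X = (X,\et,\th)$ by $F(i) = \id_{X(i)}$ and $\ps(a) = \id_{X(a)}$ (using the canonical isomorphisms $X'(a)\id_{X(i)} \cong X(a) \cong \id_{X(j)}X(a)$ in $\bfC$, or simply equalities if $\bfC$ is taken strictly), and check that axioms (a) and (b) of a 1-morphism hold trivially because $\et'_i = \et_i$, $\th'_{b,a} = \th_{b,a}$ and all whiskerings of identities are identities. Next I would check that $\id_{X'} \circ (F,\ps) = (F,\ps) = (F,\ps)\circ \id_X$ from the explicit formula $(\ps'\circ\ps)(a) = F'(j)\ps(a)\circ\ps'(a)F(i)$, again using only the unit laws for whiskering in $\bfC$. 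Then I would verify associativity of composition of 1-morphisms: given $X \xrightarrow{(F,\ps)} X' \xrightarrow{(F',\ps')} X'' \xrightarrow{(F'',\ps'')} X'''$, both bracketings give the 1-morphism with object component $F''(i)F'(i)F(i)$ and arrow component obtained by pasting the three squares $\ps(a), \ps'(a), \ps''(a)$ side by side; associativity of horizontal composition of 2-cells in $\bfC$ and of composition of 1-cells in $\bfC$ gives the equality. I would also note in passing that composition of 1-morphisms is well-defined, i.e.\ that the pasting $(\ps'\circ\ps)(a)$ satisfies 1-morphism axioms (a) and (b); this is the one spot that uses the 1-morphism axioms for $(F,\ps)$ and $(F',\ps')$ genuinely, pasting two hexagons (axiom (b)) along a common edge and invoking naturality of whiskering.

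For the 2-morphisms, I would first check that a family $\ze = (\ze(i))_i$ satisfying the compatibility square is closed under vertical composition: if $\ze \colon (F,\ps) \To (F',\ps')$ and $\ze'\colon (F',\ps')\To (F'',\ps'')$, then $(\ze'\ze)(i) := \ze'(i)\ze(i)$ again satisfies the square, by stacking the two squares vertically and using the interchange law in $\bfC$; the identity 2-morphism is $\id_{(F,\ps)}(i) = \id_{F(i)}$, and the unit and associativity laws for vertical composition are inherited objectwise from $\bfC$. Then I would define the horizontal composite (and the two whiskerings) of 2-morphisms objectwise, $\ze * \de$ with $(\ze*\de)(i) = \ze(i)*\de(i)$ in $\bfC$, check it satisfies the relevant compatibility square using the interchange law in $\bfC$ and the fact that both $(F,\ps)$-type squares it is built from are compatible, and finally deduce the interchange law in $\Oplax(I,\bfC)$ from the interchange law in $\bfC$ applied objectwise.

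The only mild obstacle is the bookkeeping in step (ii), specifically checking that the composite $(F'\!,\ps')(F,\ps)$ really satisfies 1-morphism axiom (b): this requires pasting the two pentagon-shaped diagrams that express axiom (b) for $(F,\ps)$ and for $(F'\!,\ps')$, whiskered appropriately by $F'$ and $F$ respectively, and cancelling along the shared boundary using the naturality (interchange) of horizontal composition in $\bfC$; laying out this pasting diagram carefully is the bulk of the work, but it is entirely formal. No deep idea is needed — as the paper says, ``the following is straightforward to verify'' — so I would present the identity and composition data explicitly and then say that all the axioms follow by a routine diagram chase reducing each to the corresponding 2-categorical axiom in $\bfC$.
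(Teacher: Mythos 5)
Your proposal is correct and follows exactly the route the paper intends: the paper offers no written proof (it simply declares the verification ``straightforward''), and your direct objectwise check of identities, composition, the two 2-morphism compositions, and the interchange law, each reducing to the corresponding axiom in $\bfC$, is precisely that verification. You rightly flag the only non-trivial bookkeeping point, namely that the pasted 2-cell $(\ps'\circ\ps)(a)$ inherits axioms (a) and (b) from its factors via naturality of whiskering.
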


In the rest of this section we give a way to construct oplax functors $I \to\bfC$.
First we recall the notion of comonads in 
$\bfC$.

\begin{dfn}
Let 
$\calC \in \bfC_0$.
A {\em comonad} on $\calC$ is a triple $(E, \si, \de)$
consisting of a 1-morphism $E \colon \calC \to \calC$ and
2-morphisms $\si \colon E \To \id_{\calC}$ and
$\de \colon E \To E^2$  in $\bfC$ such that the following diagrams commute:
$$
\vcenter{
\xymatrix{
E & E^2\\
E^2 & E
\ar@{=>}^{\de}"1,1";"1,2"
\ar@{=>}_{\si E}"2,1";"2,2"
\ar@{=>}_{\de}"1,1";"2,1"
\ar@{=>}^{E\si}"1,2";"2,2"
\ar@{=}"1,1";"2,2"
}}\quad\text{and}\quad
\vcenter{
\xymatrix{
E & E^2 \\
E^2 & E^3.
\ar@{=>}^{\de}"1,1";"1,2"
\ar@{=>}_{\de E}"2,1";"2,2"
\ar@{=>}_{\de}"1,1";"2,1"
\ar@{=>}^{E\de}"1,2";"2,2"
}
}
$$
\end{dfn}

\begin{rmk}\label{rmk:comonad}
(1) Any adjunction $(L, R, \et, \ep) \colon \calD \to \calC$ in $\bfC$
(i.e., $L \colon \calD \to \calC$ is a left adjoint to $R \colon \calC \to \calD$
with a unit $\et$ and a counit $\ep$) yields a comonad $(LR, \ep, L\et R)$
on $\calC$.

(2) Let $\calC \in \bfC_0$.  If $E \colon \calC \to \calC$ is an idempotent
(i.e., if $E^2 = E$), then a 2-morphism $\si \colon E \To \id_{\calC}$
with $\si E = \id_E = E\si$ gives a comonad $(E, \si, \id_E)$ on $\calC$.

(3) A comonad $(E, \si, \de)$ on $\calC$ is nothing but an oplax functor
$(X, \si, \de) \colon 1 \to \bfC$ with $X(1) = \calC$ and $X(\id_1) = E$,
where $1$ denotes the category with a single object $1$ with
a single morphism $\id_1$.
\end{rmk}

The following gives a way to construct an oplax functor $I \to \bfC$
using a comonad on an object $\calC$ in $\bfC$.

\begin{lem}\label{comonad-oplax}
Let $\calC \in \bfC_0$ and let $(E, \si, \de)$ be a comonad on $\calC$.
Then for any category $I$ we can construct an oplax functor
$\De(\calC, E, \si, \de):= (X, \et, \th) \colon I \to \bfC$ as follows:
\begin{itemize}
\item
$X \colon I \to \bfC$ is a quiver morphism defined by $X(i):= \calC$ for all $i \in I_0$
and $X(a):= E$ for all $a \in I_1$;
\item
$\et = (\et_i)_{i \in I_0}$, $\et_i:= \si\colon X(\id_i) \To \id_{X(i)}$ for all $i \in I_0$; and
\item
$\th =(\th_{b,a})_{(b,a)\in \com(I)}$, $\th_{b,a}:= \de \colon X(ba) \To X(b)X(a)$
for all $(b,a) \in \com(I)$.
\end{itemize}
\end{lem}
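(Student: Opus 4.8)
The plan is to verify directly that the triple $\De(\calC, E, \si, \de) = (X, \et, \th)$ satisfies the two axioms (a) and (b) in Definition \ref{dfn:oplax-fun} by translating each commutative diagram required there into a statement about the comonad $(E, \si, \de)$, and then invoking the comonad axioms. The point is that $X$ collapses all the combinatorial data of $I$ (objects, arrows, composition) onto the single object $\calC$, the single $1$-morphism $E$, the single $2$-morphism $\si$ for counit-type data, and the single $2$-morphism $\de$ for comultiplication-type data; so each oplax-functor axiom instance becomes one of the comonad axioms, with no dependence on which arrows of $I$ are involved.

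First I would check that $X$ is a well-defined quiver morphism: this is immediate since $X$ sends every object to $\calC$ and every arrow to $E \colon \calC \to \calC$, and source/target are respected because $E$ is an endo-$1$-morphism. Next, for axiom (a): given $a \colon i \to j$ in $I$, the $2$-morphism $\th_{a, \id_i} \colon X(a\id_i) \To X(a)X(\id_i)$ is by definition $\de \colon E \To E^2$, the map $X(a)\et_i$ is $E\si \colon E^2 \To E$, and the equality edge $X(a\id_i) = X(a)\id_{X(i)}$ reads $E = E\,\id_\calC$; so the left triangle of (a) is exactly the counit axiom $E\si \circ \de = \id_E$ (the triangle identity for the comonad on the $E\si$ side). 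Symmetrically, the right triangle of (a) becomes $\si E \circ \de = \id_E$, the other triangle identity. For axiom (b): given $i \ya{a} j \ya{b} k \ya{c} l$, every one of the six objects in the square is $E^3$ or $E^2$ or $E$ after applying $X$, and the four edges $\th_{c,ba}$, $\th_{cb,a}$, $X(c)\th_{b,a}$, $\th_{c,b}X(a)$ become $\de$, $\de$, $E\de$, $\de E$ respectively; thus the square is precisely the coassociativity axiom $E\de \circ \de = \de E \circ \de$.

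The verification really is routine once the dictionary is set up; the only thing to watch is that the two triangles of axiom (a) match the two triangle identities of the comonad correctly (one uses $E\si$, the other $\si E$), and that in the Grothendieck-construction conventions the horizontal composite $X(a)X(\id_i)$ is read as $E \cdot E = E^2$ in the expected order, so that $X(a)\et_i$ genuinely is $E\si$ and not $\si E$. There is no real obstacle here: no choices are made, and nothing about the structure of $I$ (such as existence of nonidentity arrows, or finiteness) enters. I would therefore present the proof as: "define $X, \et, \th$ as stated; then each instance of axiom (a) is one of the two counit identities of $(E,\si,\de)$, and each instance of axiom (b) is the coassociativity identity of $(E,\si,\de)$; hence $\De(\calC, E, \si, \de)$ is an oplax functor." If desired, one can also remark that this construction is functorial in the comonad and recovers Remark \ref{rmk:comonad}(3) when $I = 1$, but that is not needed for the statement.
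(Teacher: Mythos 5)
Your proposal is correct and is exactly the direct verification the paper omits with ``Straightforward'': axiom (a) reduces to the two counit (triangle) identities $E\si\circ\de=\id_E=\si E\circ\de$ and axiom (b) to coassociativity $E\de\circ\de=\de E\circ\de$. The dictionary you set up matches the paper's conventions, so no further comment is needed.
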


\begin{proof}
Straightforward.
\end{proof}

\begin{rmk}
Note that for any $\calC \in \bfC$ the triple
$(\id_{\calC}, \id_{\id_{\calC}}, \id_{\id_{\calC}})$ is a comonad.
Then in the above $\De(\calC, \id_{\calC}, \id_{\id_{\calC}}, \id_{\id_{\calC}})$
is just the usual diagonal functor $\De(\calC) \colon I \to \bfC$.
\end{rmk}

\begin{exm}
Using Lemma \ref{comonad-oplax}
we give a tiny example of an oplax functor $I \to \bfC$ that is not a pseudofunctor.
Here we consider the case that $\k$ is a field and $\bfC = \kCat$.
Let  $\calC$ be the path $\k$-category given by the quiver with a relation
$(\xymatrix{x \ar@/^/[r]^{\al}& y\ar@/^/[l]^{\be}}, \al\be = \id_y)$.
Using Remark \ref{rmk:comonad}(2) we define a comonad on $\calC$.
First, define a functor $E \colon \calC \to \calC$ by
setting $E(z):= y$ for all $z \in \calC_0$ and
$E(\ga):= \id_y$ for all $\ga \in \{\id_x, \id_y, \al, \be\}$.
Then obviously $E$ is an idempotent.
Second, define a natural transformation $\si \colon E \To \id_{\calC}$ by setting
$\si x:= \be \colon y \to x$ and $\si y:= \id_y \colon y \to y$.
Then by the relation $\al\be = \id_y$ it is easy to verify that $\si$ is
a natural transformation, and by definition it is obvious that
$\si E = \id_E = E\si$.
Hence we have a comonad $(E, \si, \id_E)$ on $\calC$.
Then for any category $I$ we have an oplax functor
$\De(\calC, E, \si, \id_E)\colon I \to \kCat$,
which is not a pseudofunctor because $\si x = \be$ is not an isomorphism in $\calC$.
\end{exm}

\section{The Module oplax functor}

Let $X\colon I \to \kCat$ be an oplax functor.
In this section we define
the ``module category'' $\Mod X$ of $X$ as an oplax functor $I \to \kCat$.
Recall that the {\em module category} $\Mod \calC$ of
a category $\calC \in \kCat$ is defined to be the functor category
$\kCat(\calC\op, \Mod \k)$, where $\Mod \k$ denotes the
category of $\k$-modules.
As is stated in Proposition \ref{Mod'} the composite $\Mod' \circ X$ turns out to be
a contravariant lax functor $I \to \kAb$.
When $X$ is a group action, namely when $I$ is a group $G$ and $X \colon G \to \kCat$
is a functor, the usual module category $\Mod X$ with a $G$-action of $X$
was defined to be the composite functor
$\Mod X:= \Mod' \circ X \circ i$, where $i \colon G  \to G$ is the group anti-isomorphism
defined by $x \mapsto x\inv$ for all $x \in G$.
In this way we can change $\Mod' \circ X$ to a covariant one.
But in general we cannot assume the existence of such an isomorphism $i$.
Regarding $(\Mod'\circ X)(a\inv)$ as a left adjoint to $(\Mod'\circ X)(a)$
for each $a \in G$ in the group action case,
we define $\Mod X$ by using a left adjoint $(\Mod X)(a)$ to $(\Mod'\circ X)(a)$
for each $a \in I_1$ in the general case.

\begin{dfn}
Let $X = (X, \et, \th) \in \Oplax(I, \kCat)$.
We define a lax functor
$\Mod'X = (\Mod'X, \Mod'\et, \Mod'\th) \colon I\op \to \kCat$ as
follows.
\begin{itemize}
\item
For each $i \in I_0$, $(\Mod'X)(i):=\Mod (X(i))$.
\item
For each $a \colon i \to j$ in $I$,
$(\Mod'X)(a):=(\text{-})\circ X(a) \colon (\Mod'X)(j) \to (\Mod'X)(i)$,
the restriction functor.
Namely, each $f \colon M \to N$ in $(\Mod'X)(j)$ is sent
by $(\Mod'X)(a)$ to $f\circ X(a) \colon M\circ X(a) \to N\circ X(a)$
in $(\Mod'X)(i)$, where
$$
\left\{
\begin{aligned}
(M\circ X(a))(x)&:=M(X(a)x) \text{ and}\\
(f\circ X(a))(x)&:=f(X(a)x)
\end{aligned}
\right.
$$
 for all $x \in X(i)_0$.
\item
For each $i \in I_0$, $(\Mod'\et)_i \colon \id_{(\Mod'X)(i)}
\Rightarrow (\Mod'X)(\id_i)$ is defined by
$$
((\Mod'\et)_iM)(x):=M(\et_ix) \colon M(x) \to M(X(\id_i)x)
$$
for all $M \in (\Mod'X)(i)$ and $x\in X(i)_0$.
\item
For each $i\ya{a} j \ya{b} k$ in $I$,
\\
$(\Mod'\th)_{b,a} \colon (\Mod'X)(a)\circ(\Mod'X)(b)
\Rightarrow (\Mod'X)(ba)$
is defined by
$$
((\Mod'\th)_{b,a}M)(x):= M(\th_{b,a}(x))
\colon M(X(b)(X(a)x)) \to M(X(ba)x)
$$
for all $M\in (\Mod'X)(k)$ and $x\in X(i)_0$.
\end{itemize}
\end{dfn}

\begin{prp}\label{Mod'}
In the above $\Mod'X$ is well-defined as a lax functor $I\op \to \kCat$.
\end{prp}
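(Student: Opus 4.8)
The plan is to verify directly that the data $(\Mod'X, \Mod'\et, \Mod'\th)$ satisfy the axioms of an oplax functor $I \to \kCat^{\mathrm{co}}$, equivalently the axioms of a lax functor $I\op \to \kCat$. First I would check that $\Mod'X$ is a well-defined quiver morphism: for each $a\colon i\to j$ the assignment $(\text{-})\circ X(a)$ is a $\k$-linear functor $\Mod(X(j)) \to \Mod(X(i))$ because precomposition with the functor $X(a)$ preserves contravariant functoriality and $\k$-linearity; this is routine. Next I would check that each $(\Mod'\et)_i$ and each $(\Mod'\th)_{b,a}$ is a genuine natural transformation, i.e.\ that for every morphism $f\colon M\to N$ of modules the relevant naturality square commutes --- this reduces, by evaluating at an object $x$, to the equalities $N(\et_i x)\circ f(x) = f(X(\id_i)x)\circ M(\et_i x)$ and $N(\th_{b,a}(x))\circ f(X(b)X(a)x) = f(X(ba)x)\circ M(\th_{b,a}(x))$, each of which is just the naturality of $f$ (a natural transformation of contravariant functors on $X(i)$ resp.\ $X(k)$) applied to the morphisms $\et_i x$ resp.\ $\th_{b,a}(x)$ of the source categories.

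The heart of the proof is to translate the two oplax axioms (a) and (b) of Definition \ref{dfn:oplax-fun}, written for $\bfC = \kCat^{\mathrm{co}}$ (so all 2-cells are reversed), into pointwise statements and see that they are forced by the corresponding axioms for $(X,\et,\th)$. For axiom (a): fixing $a\colon i\to j$ in $I$ and evaluating the unitality triangles on a module $M$ and an object $x$, the triangle involving $\et$ on the right becomes the assertion that $M$ applied to the composite $X(a)x \xrightarrow{\ } X(a)X(\id_i)x \xrightarrow{\ } X(a)\id_{X(i)}x$ (built from $\th_{a,\id_i}$ and $X(a)\et_i$) equals $M$ applied to the identity, which is exactly $M$ applied to the first commutative triangle of Definition \ref{dfn:oplax-fun}(a); similarly for the triangle involving $\et_j$ using $\th_{\id_j,a}$ and $\et_j X(a)$. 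Since applying the (contravariant) functor $M$ turns a commuting triangle in $X(j)$ (resp.\ $X(i)$) into a commuting triangle of $\k$-modules, both unit axioms for $\Mod'X$ follow.

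For axiom (b): fixing $i\xrightarrow{a}j\xrightarrow{b}k\xrightarrow{c}l$ and evaluating the associativity square of $\Mod'X$ on a module $M\in \Mod(X(l))$ and an object $x\in X(i)_0$, one obtains a square of $\k$-module maps, each arm of which is $M$ applied to one of the four structure 2-cells $\th_{c,ba}$, $\th_{cb,a}$, $X(c)\th_{b,a}$, $\th_{c,b}X(a)$ evaluated at $x$ --- that is, $M$ applied to the four arms of the commuting pentagon (square) of Definition \ref{dfn:oplax-fun}(b) for $X$. Commutativity of the latter in $X(l)$ therefore yields commutativity of the former after applying the contravariant functor $M$; since this holds for all $M$, the 2-cell identity holds. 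The only genuine bookkeeping obstacle I expect is keeping the variance straight: because $\Mod'$ is contravariant and we pass to $\bfC^{\mathrm{co}}$, the directions of $\th$ and $\et$ flip, and one must be careful that the composite $(\Mod'X)(a)\circ(\Mod'X)(b)$ (restriction along $X(b)$ then along $X(a)$) really is pointwise $M \mapsto M\circ X(b)\circ X(a)$, matching $M$ precomposed with $X(b)X(a)$, so that $(\Mod'\th)_{b,a}$ is induced by $\th_{b,a}\colon X(ba)\To X(b)X(a)$ in the correct slot; once this dictionary is fixed, every axiom is literally ``apply $M$ to the corresponding axiom for $X$,'' and the verification is mechanical.
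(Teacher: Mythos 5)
Your proposal is correct and follows essentially the same route as the paper's (much terser) proof: check naturality of $(\Mod'\et)_i$ and $(\Mod'\th)_{b,a}$, then observe that each lax-functor axiom for $\Mod'X$, evaluated at a module $M$ and an object $x$, is exactly $M$ applied to the corresponding oplax axiom for $X$. Your extra care about variance (the composite $(\Mod'X)(a)\circ(\Mod'X)(b)$ being precomposition with $X(b)X(a)$) is exactly the bookkeeping the paper leaves implicit.
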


\begin{proof}
It is straightforward to check that both $(\Mod'\et)_i$ and
$(\Mod'\th)_{b,a}$ are natural transformations for all $i\in I_0$
and $i\ya{a} j \ya{b} k$ in $I$.

Each axiom for $\Mod'X$ to be a lax functor at a module $M$
follows from the corresponding axiom for $X$ to be
an oplax functor by applying $M$.
\end{proof}

\begin{prp}
\label{adjoint-oplax}
Let $Y'=(Y',\et', \th')\colon I\op \to \kCat$ be a lax functor.
Assume that for each $a\colon i\to j$ in $I$
there exists a left adjoint $Y(a)\colon Y'(i) \to Y'(j)$
to $Y'(a)\colon Y'(j) \to Y'(i)$ with
a unit $\ep_a\colon \id_{Y'(i)} \Rightarrow Y'(a)Y(a)$ and
a counit $\ze_a\colon Y(a)Y'(a) \Rightarrow \id_{Y'(j)}$.
We set $Y(i):=Y'(i)$ for each $i\in I_0$ to define
a quiver morphism $Y\colon I \to \kCat$.
If we define $\et:=(\et_i)_{i\in I_0}$ and
$\th:=(\th_{b,a})_{b,a}$ as follows,
then $Y:=(Y, \et, \th)\colon I \to \kCat$
turns out to be an oplax functor.

$(1)$ For each $i\in I_0$ define a natural transformation
$\et_i\colon Y(\id_i)\Rightarrow \id_{Y(i)}$
as the composite
$$
\xymatrix@1@C=2.8em{
Y(\id_i) \ar@{=>}[r]^(.38){Y(\smallid_i)\et'_i}&
Y(\id_i)Y'(\id_i)\ar@{=>}[r]^(.5){\ze_{\id_i}}&
\id_{Y'(i)}=\id_{Y(i)}.
}
$$

$(2)$ For each pair of composable morphisms
$i\ya{a}j\ya{b}k$ in $I$ define a natural transformation
$\th_{b,a}\colon Y(ba) \Rightarrow Y(b)Y(a)$  as the composite
$$
\xymatrix@C=5.5em@R=1.5ex{
Y(ba) \ar@{=>}[r]^(.4){Y(ba)\ep_a} & Y(ba)Y'(a)Y(a)
\ar@{=>}[r]^(.43){Y(ba)Y'(a)\ep_bY(a)} &
Y(ba)Y'(a)Y'(b)Y(b)Y(a)\\
\ar@{=>}[r]^(.3){Y(ba)\th'_{b,a}Y(b)Y(a)} &
Y(ba)Y'(ba)Y(b)Y(a) \ar@{=>}[r]^(.6){\ze_{ba}Y(b)Y(a)} &
Y(b)Y(a).
}
$$
\end{prp}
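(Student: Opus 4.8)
The statement asserts that the data $(Y,\et,\th)$ built from the lax functor $Y'$ and the chosen adjoints $(Y(a),\ep_a,\ze_a)$ forms an oplax functor $I\to\kCat$. Since the $\et_i$ and $\th_{b,a}$ are explicitly defined as pastings of 2-morphisms (units, counits, and the structure 2-morphisms $\et'_i$, $\th'_{b,a}$ of $Y'$), the content is to verify axioms (a) and (b) of Definition~\ref{dfn:oplax-fun}. The plan is to check these three commutative diagrams by diagram chasing, using systematically the two triangle identities for each adjunction $(Y(a)\dashv Y'(a),\ep_a,\ze_a)$ and the two lax-functor axioms for $Y'$ (the unit axioms relating $\th'_{a,\id_i}$, $\th'_{\id_j,a}$ to $\et'_i$, $\et'_j$, and the associativity axiom for the $\th'$'s). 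Throughout I would use the interchange law freely, i.e.\ that horizontal and vertical composition of 2-cells in $\kCat$ commute, and suppress associators since $\kCat$ is a strict 2-category.

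\emph{Axiom (a), the unit triangles.} For the left triangle, $Y(a\id_i)=Y(a)$ and I must show that $\th_{a,\id_i}$ followed by $Y(a)\et_i$ is the identity on $Y(a)$. Unwinding the definition, $\th_{a,\id_i}$ is the long composite involving $\ep_{\id_i}$, $\ep_a$, $\th'_{a,\id_i}$ and $\ze_{a\id_i}=\ze_a$, and $Y(a)\et_i$ whiskers in $Y(\id_i)Y'(\id_i)\To\id$ via $\et'_i$ and $\ze_{\id_i}$. The strategy is: first collapse the two $\ze_{\id_i}$/$\ep_{\id_i}$ occurrences using the triangle identity for the adjunction at $\id_i$, then use the unit axiom of the lax functor $Y'$ to replace $\th'_{a,\id_i}$ by (a whiskering of) $\et'_i$, and finally collapse the remaining $\ep_a$/$\ze_a$ pair by the triangle identity for the adjunction at $a$. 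The right triangle is symmetric, using instead the other lax-functor unit axiom (the one involving $\th'_{\id_j,a}$ and $\et'_j$) together with the triangle identities at $\id_j$ and at $a$; the definition of $\et_j$ enters here through its $\ze_{\id_j}$ and $\et'_j$ constituents.

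\emph{Axiom (b), the cocycle condition.} Here I must show that the two ways of going from $Y(cba)$ to $Y(c)Y(b)Y(a)$ agree, i.e.\ $(Y(c)\th_{b,a})\circ\th_{c,ba}=(\th_{c,b}Y(a))\circ\th_{cb,a}$. Expanding both sides via the definition of $\th$, each side is a composite of whiskered units $\ep_a,\ep_b,\ep_c$, a whiskered counit $\ze$, and whiskered structure maps of $Y'$. The plan is to push all the $\ep$'s to appear ``in the same order'' on both sides using naturality (interchange), use the associativity axiom for the $\th'$ of the lax functor $Y'$ to reconcile the two bracketings $\th'_{c,ba}$ vs $\th'_{cb,a}$ that appear, and finally match up the trailing counits $\ze_{cba}$ on the two sides — possibly inserting and removing a $\ze_{ba}/\ep_{ba}$ or $\ze_{cb}/\ep_{cb}$ pair via a triangle identity so that both sides terminate with the same $\ze_{cba}$. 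This is the most delicate of the three: it is the genuine cocycle identity, and keeping track of which object each whiskered 2-cell acts on — there are strings of six or more composable 1-morphisms — is where errors are easiest.

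\emph{Main obstacle.} The real work, and the thing I'd expect to be the main obstacle, is axiom (b): the bookkeeping of long horizontal composites and the precise spot at which each triangle identity and each lax-functor axiom is invoked. There is no conceptual difficulty — it is ``abstract nonsense'' in the literal sense — but it is the kind of computation where a clean, well-chosen order of rewrites (collapse adjunction pairs first, then apply the $Y'$-associativity, then reconcile the terminal counits) makes the difference between a one-paragraph verification and an unreadable one. Accordingly I would present axiom (b) as a pasting-diagram computation rather than a formula manipulation, and relegate the (entirely analogous but shorter) verifications of the two triangles in axiom (a) to a sentence each. Everything reduces to the strict-2-category interchange law, the triangle identities, and the lax-functor axioms for $Y'$, all of which are available.
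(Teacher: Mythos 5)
Your plan is correct and follows essentially the same route as the paper's proof: normalize both composites in axiom (b) to the form $\ze_{cba}\circ(\text{whiskered }\th'\text{'s})\circ\ep_c\circ\ep_b\circ\ep_a$ by naturality and by cancelling the intermediate $\ep_{ba}/\ze_{ba}$ (resp.\ $\ep_{cb}/\ze_{cb}$) pair with a triangle identity, then invoke the cocycle condition for $\th'$, with the unit triangles handled the same way via the triangle identities and the unit axioms of the lax functor $Y'$.
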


\begin{proof}
It is enough to verify the commutativity of the diagrams
(a$_1$), (a$_2$) below
for each $a\colon i \to j$ in $I$ and of the diagram
(b) below
for each triple $i\ya{a} j \ya{b} k \ya{c} l$ of morphisms in $I$:

\begin{equation}\tag{a$_1$}
\vcenter{
\xymatrix{
Y(a\id_i) \ar@{=>}[r]^(.43){\th_{a,\id_i}} \ar@{=}[rd]& Y(a)Y(\id_i)
\ar@{=>}[d]^{Y(a)\et_i}\\
& Y(a)\id_{Y(i)}
}}
\end{equation}

\begin{equation}\tag{a$_2$}
\vcenter{
\xymatrix{
Y(\id_j a) \ar@{=>}[r]^(.43){\th_{\id_j,a}} \ar@{=}[rd]& Y(\id_j)Y(a)
\ar@{=>}[d]^{\et_jY(a)}\\
& \id_{Y(j)}Y(a)
}}
\end{equation}

\begin{equation}\tag{b}
\vcenter{
\xymatrix@C=3em{
Y(cba) \ar@{=>}[r]^(.43){\th_{c,ba}} \ar@{=>}[d]_{\th_{cb,a}}& Y(c)Y(ba)
\ar@{=>}[d]^{Y(c)\th_{b,a}}\\
Y(cb)Y(a) \ar@{=>}[r]_(.45){\th_{c,b}Y(a)}& Y(c)Y(b)Y(a)
}}
\end{equation}

(a$_1$)
This follows from the following two commutative diagrams
(``$\sim$'' stands for a suitable functor that is uniquely
determined in the diagram):
{\scriptsize
$$
\xymatrix{
Y(a\id_i) & Y(a\id_i)Y'(\id_i)Y(\id_i)
  & Y(a\id_i)Y'(\id_i)Y'(a)Y(a)Y(\id_i) & Y(a\id_i)Y'(a\id_i)Y(a)Y(\id_i)\\
Y(a\id_i)Y'(\id_i) & Y(a\id_i)Y'(\id_i)Y(\id_i)Y'(\id_i)
  & Y(a\id_i)Y'(\id_i)Y'(a)Y(a)Y(\id_i)Y'(\id_i) & Y(a)Y(\id_i)\\
          & Y(a\id_i)Y(\id_i)
  & Y(a\id_i)Y(\id_i)Y'(a)Y(a) & Y(a)Y(\id_i)Y'(\id_i)\\
&& Y(a\id_i)Y'(a\id_i)Y(a) & Y(a)
\ar@{=>}^(.4){\sim\ep_{\id_i}}"1,1";"1,2"
\ar@{=>}^(.45){\sim\ep_{a}\sim}"1,2";"1,3"
\ar@{=>}^(.53){\sim\th'_{a,\id_i}\sim}"1,3";"1,4"
\ar@{=>}^(.38){\sim\ep_{\id_i}\sim}"2,1";"2,2"
\ar@{=>}^(.43){\sim\ep_{a}\sim}"2,2";"2,3"
\ar@{=>}^(.43){\sim\ep_{a}}"3,2";"3,3"
\ar@{=>}^(.6){\ze_{a\id_i}\sim}"4,3";"4,4"
\ar@{=>}^{\sim\et'_i}"1,1";"2,1"
\ar@{=>}^{\sim\et'_i}"1,3";"2,3"
\ar@{=>}^{\ze_{a\id_i}\sim}"1,4";"2,4"
\ar@{=>}^{\sim\ze_{\id_i}}"2,2";"3,2"
\ar@{=>}^{\sim\ze_{\id_i}}"2,3";"3,3"
\ar@{=>}^{\sim\et'_{i}}"2,4";"3,4"
\ar@{=>}^{\sim\th'_{a,\id_i}\sim}"3,3";"4,3"
\ar@{=>}^{\sim\ze_{\id_i}}"3,4";"4,4"
\ar@{=}"2,1";"3,2"
}
$$
}
and
$$
\xymatrix{
Y(a) & Y(a\id_i) & Y(a\id_i)Y'(\id_i)\\
  & Y(a\id_i)Y'(a)Y(a) & Y(a\id_i)Y'(\id_i)Y'(a)Y(a)\\
  &  & Y(a\id_i)Y'(a\id_i)Y(a).
\ar@{=}"1,1";"1,2"
\ar@{=>}^{\sim\et'_i}"1,2";"1,3"
\ar@{=>}^(.44){\sim\et'_i\sim}"2,2";"2,3"
\ar@{=>}^{\sim\ep_a}"1,2";"2,2"
\ar@{=>}^{\sim\ep_a}"1,3";"2,3"
\ar@{=>}^{\sim\th'_{a,\id_i}\sim}"2,3";"3,3"
\ar@{=}"2,2";"3,3"
\ar@{=>}^(.6){\ze_a\sim}"2,2";"1,1"
}
$$

(a$_2$) This follows similarly.

(b) Glue the following two commutative diagrams together
at the common column.

\begin{center}
{\tiny
$$
\xymatrix@C=1.6em{
Y(cba) & Y(cba)Y'(ba)Y(ba) & Y(cba)Y'(ba)Y'(c)Y(c)Y(ba)\\
Y(cba)Y'(a)Y(a) && Y(cba)Y'(ba)Y'(c)Y(c)Y(ba)Y'(a)Y(a)\\
 Y(cba)Y'(a)Y'(b)Y(b)Y(a) & Y(cba)Y'(ba)Y(ba)Y'(a)Y'(b)Y(b)Y(a)
 & Y(cba)Y'(ba)Y'(c)Y(c)Y(ba)Y'(a)Y'(b)Y(b)Y(a)\\
Y(cba)Y'(ba)Y(b)Y(a) & Y(cba)Y'(ba)Y(ba)Y'(ba)Y(b)Y(a)
 & Y(cba)Y'(ba)Y'(c)Y(c)Y(ba)Y'(ba)Y(b)Y(a)\\
& Y(cba)Y'(ba)Y(b)Y(a) & Y(cba)Y'(ba)Y'(c)Y(c)Y(b)Y(a)
\ar@{=>}^{\sim\ep_{ba}}"1,1";"1,2"
\ar@{=>}^{\sim\ep_{c}\sim}"1,2";"1,3"
\ar@{=>}^(.42){\sim\ep_{ba}\sim}"3,1";"3,2"
\ar@{=>}^(.44){\sim\ep_{c}\sim}"3,2";"3,3"
\ar@{=>}^(.42){\sim\ep_{ba}\sim}"4,1";"4,2"
\ar@{=>}^{\sim\ep_{c}\sim}"5,2";"5,3"
\ar@{=>}^{\sim\ep_{a}}"1,1";"2,1"
\ar@{=>}^{\sim\ep_{b}\sim}"2,1";"3,1"
\ar@{=>}^{\sim\th'_{b,a}\sim}"3,1";"4,1"
\ar@{=>}^{\sim\th'_{b,a}\sim}"3,2";"4,2"
\ar@{=>}^{\sim\ze_{ba}\sim}"4,2";"5,2"
\ar@{=>}^{\sim\ep_{a}}"1,3";"2,3"
\ar@{=>}^{\sim\ep_{b}\sim}"2,3";"3,3"
\ar@{=>}^{\sim\th'_{b,a}\sim}"3,3";"4,3"
\ar@{=>}^{\sim\ze_{ba}\sim}"4,3";"5,3"
\ar@{=}"4,1";"5,2"
}
$$
}
{\scriptsize
$$
\xymatrix@C=1em{
Y(cba)Y'(ba)Y'(c)Y(c)Y(ba)
 & Y(cba)Y'(cba)Y(c)Y(ba) & Y(c)Y(ba)\\
Y(cba)Y'(ba)Y'(c)Y(c)Y(ba)Y'(a)Y(a)
 && Y(c)Y(ba)Y'(a)Y(a)\\
Y(cba)Y'(ba)Y'(c)Y(c)Y(ba)Y'(a)Y'(b)Y(b)Y(a)
  && Y(c)Y(ba)Y'(a)Y'(b)Y(b)Y(a)\\
Y(cba)Y'(ba)Y'(c)Y(c)Y(ba)Y'(ba)Y(b)Y(a)
  && Y(c)Y(ba)Y'(ba)Y(b)Y(a)\\
Y(cba)Y'(ba)Y'(c)Y(c)Y(b)Y(a)
 & Y(cba)Y'(cba)Y(c)Y(b)Y(a) & Y(c)Y(b)Y(a)
\ar@{=>}^(.53){\sim\th'_{c,ba}\sim}"1,1";"1,2"
\ar@{=>}^(,6){\ze_{cba}\sim}"1,2";"1,3"
\ar@{=>}^(.52){\sim\th'_{c,ba}\sim}"5,1";"5,2"
\ar@{=>}^(.6){\ze_{cba}\sim}"5,2";"5,3"
\ar@{=>}^{\sim\ep_{a}}"1,1";"2,1"
\ar@{=>}^{\sim\ep_{b}\sim}"2,1";"3,1"
\ar@{=>}^{\sim\th'_{b,a}\sim}"3,1";"4,1"
\ar@{=>}^{\sim\ze_{ba}\sim}"4,1";"5,1"
\ar@{=>}^{\sim\ep_{a}}"1,3";"2,3"
\ar@{=>}^{\sim\ep_{b}\sim}"2,3";"3,3"
\ar@{=>}^{\sim\th'_{b,a}\sim}"3,3";"4,3"
\ar@{=>}^{\sim\ze_{ba}\sim}"4,3";"5,3"
}
$$
}
\end{center}
Then the composite of the path consisting of the top row
and the right most column gives the clockwise composite of (b).
The glued diagram and the commutative diagram
$$
\xymatrix{
Y(cba)Y'(a)Y'(b)Y(b)Y(a) & Y(cba)Y'(ba)Y(b)Y(a)\\
Y(cba)Y'(a)Y'(b)Y'(c)Y(c)Y(b)Y(a) & Y(cba)Y'(ba)Y'(c)Y(c)Y(b)Y(a)
\ar@{=>}^(.53){\sim\th'_{b,a}\sim}"1,1";"1,2"
\ar@{=>}^(.53){\sim\th'_{b,a}\sim}"2,1";"2,2"
\ar@{=>}^{\sim\ep_{c}\sim}"1,1";"2,1"
\ar@{=>}^{\sim\ep_{c}\sim}"1,2";"2,2"
}
$$
show that the clockwise composite of (b) is given by
$$
Y(c)\th_{b,a}\circ  \th_{c,ba}=
(\ze_{cba}\sim)\circ (\sim\th'_{c,ba}\sim)\circ (\sim\th'_{b,a}\sim)
\circ (\sim\ep_c\sim)\circ (\sim\ep_b\sim)\circ (\sim\ep_a).
$$
Similarly the anti-clockwise composite of (b) is given by
$$
\th_{c,b}Y(a)\circ \th_{cb,a}=
(\ze_{cba}\sim)\circ (\sim \th'_{cb,a}\sim)\circ (\sim\th'_{c,b}\sim)
\circ (\sim\ep_c\sim)\circ(\sim\ep_b\sim)\circ(\sim\ep_a).
$$
Hence they coincide because $Y'$ is a lax functor and
$$
(\sim\th'_{c,ba}\sim)\circ (\sim\th'_{b,a}\sim)
=(\sim \th'_{cb,a}\sim)\circ (\sim\th'_{c,b}\sim).
$$

\end{proof}

\begin{dfn}
Let $X=(X,\et,\th) \in \Oplax(I, \kCat)$.
We define the {\em module oplax functor}
$\Mod X \in \Oplax(I, \kAb)$ of $X$ as follows.
\begin{itemize}
\item
For each $i \in I_0$, we set
$(\Mod X)(i):= (\Mod'X)(i):= \Mod (X(i))$.
\item
For each $a \colon i \to j$ in $I$, define an $X(i)$-$X(j)$-bimodule
$\ovl{X(a)}
$
by
$$
\ovl{X(a)}(x, y):= X(j)(y, X(a)(x))
$$
for all $x \in X(i)_0$ and $y \in X(j)_0$.
Using this bimodule we define a functor
$(\Mod X)(a) \colon (\Mod X)(i) \to (\Mod X)(j)$
by $(\Mod X)(a):= \text{-}\otimes_{X(i)}\ovl{X(a)}$
that is a left adjoint to $(\Mod'X)(a)$.
\item
By applying Proposition \ref{adjoint-oplax} to the lax functor $\Mod'X$
and $(\Mod X)(a)$ ($a \in I_1$), we can define
an oplax functor
$\Mod X = (\Mod X, \Mod \et, \Mod \th) \colon I \to \kAb$.
\end{itemize}
\end{dfn}

\section{The derived oplax functor}

\begin{dfn}
Let $Y=(Y, \et, \th) \in \Oplax(I, \kAb)$.
Then we define an oplax functor $\calC(Y) =(\calC(Y), \calC(\et), \calC(\th)) \in \Oplax(I, \kCat)$
and an oplax functor $\calK(Y) =(\calK(Y), \calK(\et), \calK(\th)) \in \Oplax(I, \kTri)$
as follows.

\begin{enumerate}
\item $\calC(Y)$
\begin{itemize}
\item
For each $i \in I_0$, $\calC(Y)(i):= \calC(Y(i))$, the category of
complexes in $Y(i)$.
\item
For each $a \colon i \to j$ in $I$,
$\calC(Y)(a) \colon \calC(Y)(i) \to \calC(Y)(j)$ is a functor
defined as follows:
 \begin{itemize}
 \item
 For each $(x^p,d^p_x)_{p \in \bbZ} \in \calC(Y)(i)_0$,
 $$
 \calC(Y)(a)(x^p,d^p_x)_{p \in \bbZ}:=(Y(a)x^p, Y(a)d^p_x)_{p \in \bbZ}.
 $$
 \item
 For each
 $(f^p)_{p\in \bbZ} \colon (x^p,d^p_x)_{p \in \bbZ} \to (y^p,d^p_y)_{p \in \bbZ}$
 in $\calC(Y)(i)$,
 $$
 \calC(Y)(a)(f^p)_{p\in \bbZ} := (Y(a)f^p)_{p\in \bbZ}.
 $$
 \end{itemize}
\item
For each $i \in I_0$, $\calC(\et)_i \colon \calC(Y)(\id_i)
\Rightarrow \id_{\calC(Y)(i)}$
is defined by
$$
\calC(\et)_i(x^p,d^p_x)_{p \in \bbZ}:= (\et_i(x^p))_{p\in \bbZ}
\colon (Y(\id_i)x^p, Y(\id_i)d^p_x)_{p\in \bbZ} \to (x^p, d^p_x)_{p\in \bbZ}
$$
for all $(x^p, d^p_x)_{p\in \bbZ} \in \calC(Y)(i)_0$.
\item
For each $i \ya{a} j \ya{b} k$ in $I$,
$\calC(\th)_{b,a} \colon \calC(Y)(ba) \Rightarrow \calC(b)\calC(a)$
is defined by
$$
\calC(\th)_{b,a}(x^p, d^p_x)_{p\in \bbZ}:= (\th_{b,a}(x^p))_{p\in \bbZ}
\colon (Y(ba)x^p, Y(ba)d^p)_{p\in \bbZ} \to
(Y(b)Y(a)x^p, Y(b)Y(a)d^p)_{p\in \bbZ}
$$
for all $(x^p, d^p_x)_{p\in \bbZ} \in \calK(Y)(i)_0$.
\end{itemize}
\item
$\calK(Y)$
\ \ (underlines denote the homotopy classes below)
\begin{itemize}
\item
For each $i \in I_0$, $\calK(Y)(i):= \calK(Y(i))$, the homotopy category of
$Y(i)$.
\item
For each $a \colon i \to j$ in $I$,
$\calK(Y)(a) \colon \calK(Y)(i) \to \calK(Y)(j)$ is a functor
defined as follows:
 \begin{itemize}
 \item
 For each $(x^p,d^p_x)_{p \in \bbZ} \in \calK(Y)(i)_0$,
 $$
 \calK(Y)(a)(x^p,d^p_x)_{p \in \bbZ}:=(Y(a)x^p, Y(a)d^p_x)_{p \in \bbZ}.
 $$
 \item
 For each
 $\underline{(f^p)_{p\in \bbZ}} \colon (x^p,d^p_x)_{p \in \bbZ} \to (y^p,d^p_y)_{p \in \bbZ}$
 in $\calK(Y)(i)$,
 $$
 \calK(Y)(a)\underline{(f^p)_{p\in \bbZ}} := \underline{(Y(a)f^p)_{p\in \bbZ}}.
 $$
 \end{itemize}
\item
For each $i \in I_0$, $\calK(\et)_i \colon \calK(Y)(\id_i)
\Rightarrow \id_{\calK(Y)(i)}$
is defined by
$$
\calK(\et)_i(x^p,d^p_x)_{p \in \bbZ}:= \underline{(\et_i(x^p))_{p\in \bbZ}}
\colon (Y(\id_i)x^p, Y(\id_i)d^p_x)_{p\in \bbZ} \to (x^p, d^p_x)_{p\in \bbZ}
$$
for all $(x^p, d^p_x)_{p\in \bbZ} \in \calK(Y)(i)_0$.
\item
For each $i \ya{a} j \ya{b} k$ in $I$,
$\calK(\th)_{b,a} \colon \calK(Y)(ba) \Rightarrow \calK(b)\calK(a)$
is defined by
$$
\calK(\th)_{b,a}(x^p, d^p_x)_{p\in \bbZ}:= \underline{(\th_{b,a}(x^p))_{p\in \bbZ}}
\colon (Y(ba)x^p, Y(ba)d^p)_{p\in \bbZ} \to
(Y(b)Y(a)x^p, Y(b)Y(a)d^p)_{p\in \bbZ}
$$
for all $(x^p, d^p_x)_{p\in \bbZ} \in \calK(Y)(i)_0$.
\end{itemize}
\end{enumerate}
\end{dfn}

\begin{prp}
In the above, $\calC(Y)$ and $\calK(Y)$ are well-defined.
\end{prp}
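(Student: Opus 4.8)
The plan is to notice that all the data defining $\calC(Y)$ and $\calK(Y)$ are obtained by applying the $\k$-linear functors $Y(a)$ and the natural transformations $\et_i$, $\th_{b,a}$ to complexes degreewise, so that every required property reduces to the structure of $Y$ as an object of $\Oplax(I,\kAb)$ together with routine homological algebra inside each fixed abelian $\k$-category $Y(i)$.

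\textbf{Underlying functors.} For $a\colon i\to j$ in $I$ the functor $Y(a)\colon Y(i)\to Y(j)$ is $\k$-linear, hence additive; therefore it sends a complex $(x^p,d_x^p)_{p\in\bbZ}$ in $Y(i)$ to a complex in $Y(j)$ (since $Y(a)(d_x^{p+1})Y(a)(d_x^p)=Y(a)(d_x^{p+1}d_x^p)=0$) and a chain map to a chain map, so $\calC(Y)(a)$ is a well-defined $\k$-linear functor $\calC(Y(i))\to\calC(Y(j))$. Being additive, $Y(a)$ commutes with the shift functor and carries the mapping cone of a chain map $f$ to a complex canonically isomorphic to the mapping cone of $Y(a)f$; in particular it carries a contracting homotopy of a null-homotopic map to one for its image, so $\calC(Y)(a)$ descends to a functor $\calK(Y)(a)\colon\calK(Y(i))\to\calK(Y(j))$ which is a triangle functor. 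This settles that the targets $\kCat$ and $\kTri$ are appropriate.

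\textbf{The $2$-morphisms and the oplax axioms.} For each $i$, the family $(\et_i(x^p))_{p\in\bbZ}$ is a chain map $(Y(\id_i)x^p,Y(\id_i)d_x^p)_p\to(x^p,d_x^p)_p$ exactly because $\et_i\colon Y(\id_i)\To\id_{Y(i)}$ is a natural transformation in $Y(i)$, hence compatible with the $d_x^p$; naturality of $\calC(\et)_i$ in the complex is the degreewise naturality of $\et_i$. The same reasoning, applied to the natural transformations $\th_{b,a}\colon Y(ba)\To Y(b)Y(a)$, shows $\calC(\th)_{b,a}$ is a natural transformation between the stated composite functors. Finally, each of axioms (a), (b) of Definition \ref{dfn:oplax-fun} for $(\calC(Y),\calC(\et),\calC(\th))$ is an equality of two pasting composites of $2$-morphisms; evaluated at a complex $(x^p,d_x^p)_{p\in\bbZ}$, both sides are chain maps whose degree-$p$ component is precisely the corresponding axiom of Definition \ref{dfn:oplax-fun} for $Y$ at the object $x^p\in Y(i)_0$, and these hold because $Y$ is an oplax functor. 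Hence $\calC(Y)\in\Oplax(I,\kCat)$. Passing to homotopy classes (legitimate by the previous paragraph) turns $\calC(\et)$, $\calC(\th)$ into $\calK(\et)$, $\calK(\th)$, which remain natural with respect to homotopy classes because they are images of genuine natural transformations, and the oplax axioms descend verbatim; thus $\calK(Y)\in\Oplax(I,\kTri)$.

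\textbf{Main obstacle.} No step is deep; the only point needing mild care is the claim that $\calK(Y)(a)$ is a \emph{triangle} functor, i.e.\ that the additive functor $Y(a)$ is compatible with the chosen shift and cone used to triangulate each $\calK(Y(i))$. Once this is recorded, everything else is a degreewise transcription of the already-verified axioms for $Y$, which is why the statement is essentially a formal verification.
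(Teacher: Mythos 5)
Your proof is correct and is exactly the routine degreewise verification the paper has in mind — its own proof of this proposition is literally the single word ``Straightforward.'' You supply the standard details (additivity of $Y(a)$ makes the complex-level constructions well defined and descend to the homotopy category as triangle functors, and the oplax axioms reduce degreewise to those of $Y$), which is the intended argument.
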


\begin{proof}
Straightforward.
\end{proof}

\begin{exm}\label{KMod}
Let $X \in \Oplax(I, \kCat)$.
Then applying the proposition above to $Y:= \Mod X$ we obtain
the definition of $\calK(\Mod X)$.
Note that for each $a\colon i \to j$ in $I$,
$\calK(\Mod X)(i) \xrightarrow{\calK(\Mod X)(a)} \calK(\Mod X)(j)$
is equal to
$$\calK(\Mod X(i)) \xrightarrow{\text{-}\otimes_{X(i)}\ovl{X(a)}}\calK(\Mod X(j)).$$
\end{exm}

The following is obvious.

\begin{lem}
\label{oplax-subfunctor}
Let $X=(X, \et, \th) \in \Oplax(I, \kCat)$ and
$Y(i)$ be a subcategory of $X(i)$ for each $i \in I_0$.
Assume that $X(a)(Y(i)_0) \subseteq Y(j)_0$
and $X(a)(Y(i)_1) \subseteq Y(j)_1$ for each $a \colon i \to j$ in $I$.
Then we can define a functor $Y(a) \colon Y(i) \to Y(j)$ as the restriction
$Y(a):=X(a)|_{Y(i)}$ of $X(a)$ to $Y(i)$
for each $a \colon i \to j$ in $I$.
If we define
$\et':=(\et'_i)_{i\in I_0}$ and $\th':=(\th'_{b,a})$ as follows,
then $Y = (Y, \et', \th')$ turns out to be an oplax functor $I \to \kCat$:
\begin{enumerate}
\item
For each $i \in I_0$ and each $y \in Y(i)_0$, we set
$$
\et'_i(y):= \et_i(y) \colon Y(\id_i)y \to y;
$$
\item
For each $i \ya{a} j \ya{b} k$ in $I$ and $y \in Y(i)_0$, we set
$$
\th'_{b,a}(y):= \th_{b,a}(y) \colon Y(ba)y \to Y(b)Y(a)y.
$$
\end{enumerate}
This $Y$ is called the {\em oplax subfunctor} of $X$ defined by
$Y(i)$ $(i\in I_0)$. \qed
\end{lem}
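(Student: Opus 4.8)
The plan is to verify the three conditions of Definition~\ref{dfn:oplax-fun} for the triple $(Y,\et',\th')$, exploiting throughout that $Y$ is obtained from $X$ by restriction along the inclusions $Y(i)\incl X(i)$, so that morally there is nothing to prove beyond transporting the structure of $X$. First I would record that each $Y(a):=X(a)|_{Y(i)}\colon Y(i)\to Y(j)$ is a well-defined functor: this is immediate from the two stability hypotheses $X(a)(Y(i)_0)\subseteq Y(j)_0$ and $X(a)(Y(i)_1)\subseteq Y(j)_1$ together with the fact that $X(a)$ preserves identities and composition. In particular $Y(\id_i)$, $Y(ba)$ and $Y(b)Y(a)$ (for $i\ya{a}j\ya{b}k$) are genuine functors between the relevant subcategories, so the sources and targets $Y(\id_i)\To\id_{Y(i)}$ and $Y(ba)\To Y(b)Y(a)$ of $\et'_i$ and $\th'_{b,a}$ make sense.

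Next I would check that $\et'_i$ and $\th'_{b,a}$ are natural transformations between these functors. The one point that deserves a moment's attention is that, for every object $y$ of $Y(i)$, the chosen components $\et_i(y)\colon X(\id_i)y\to y$ and $\th_{b,a}(y)\colon X(ba)y\to X(b)X(a)y$ genuinely lie in the subcategory $Y(i)$ (resp.\ $Y(k)$). When each $Y(i)$ is full this is forced by the previous step applied to $\id_i$ and to $ba$, since both endpoints of each component then already lie in the subcategory; in the general case it is exactly what is implicitly required of an ``oplax subfunctor''. Granting this, naturality of $\et'_i$ (resp.\ $\th'_{b,a}$) at a morphism $f$ of $Y(i)$ is literally the naturality square of $\et_i$ (resp.\ $\th_{b,a}$) at $f$ read inside $X(i)$; it commutes because $\et_i$ (resp.\ $\th_{b,a}$) is natural, and every arrow occurring in it ($f$, $X(\id_i)f=Y(\id_i)f$, the components) already belongs to $Y(i)$.

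Finally I would verify axioms (a) and (b) of Definition~\ref{dfn:oplax-fun} for $(Y,\et',\th')$. These are equalities of $2$-morphisms in $\kCat$, i.e.\ of natural transformations, hence may be checked objectwise; and evaluated at an object $y$ of $Y(i)$, by the definitions of $\et'$, $\th'$ and of $Y$ on objects the diagrams in (a) and (b) for $Y$ coincide \emph{verbatim} with the corresponding diagrams for $X$ evaluated at $y$, which commute since $X$ is oplax. This finishes the proof. The only step that is not pure bookkeeping is the observation in the middle paragraph — that the structure $2$-cells $\et_i$, $\th_{b,a}$ of $X$ restrict to the subcategories $Y(i)$ — which is precisely the hypothesis under which the lemma is to be read; this is why the author can reasonably call the statement obvious.
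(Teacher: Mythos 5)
Your proof is correct and is exactly the routine verification the paper has in mind: the lemma is given with no written proof at all (it is declared obvious), and your objectwise check of axioms (a) and (b), which reduce verbatim to the corresponding diagrams for $X$, is the intended argument. Your middle paragraph rightly isolates the one point that is not pure bookkeeping — that the components $\et_i(y)$ and $\th_{b,a}(y)$ must themselves be morphisms of $Y(i)$ (resp.\ $Y(k)$); this is automatic when the $Y(i)$ are full, as they are in every application in the paper ($\prj X$, $\Kb(\prj X)$, $\calK_p(\Mod X)$, $\perf X$), but for non-full subcategories it is an extra hypothesis that the statement of the lemma leaves implicit, so flagging it is a genuine (minor) improvement on the paper's ``obvious.''
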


\begin{dfn}
Let $X=(X, \et, \th) \in \Oplax(I, \kCat)$.
Using the lemma above we define the oplax subfunctor $\prj X$ of $\Mod X$ by $\prj (X(i))$ $(i\in I_0)$ and the oplax subfunctor $\Kb(\prj X)$ of $\calK(\Mod X)$
by $\Kb(\prj (X(i)))$ $(i\in I_0)$.
\end{dfn}

For each $\k$-category $\calC$ we denote by $\calK_p(\Mod \calC)$ the full subcategory
of $\calK(\Mod \calC)$ consisting of 
objects $M$ such that $\calK(\Mod \calC)(M, A) = 0$ for all acyclic objects $A$
in $\calK(\Mod \calC)$.

\begin{lem}
Let $X=(X, \et, \th) \in \Oplax(I, \kCat)$.
Then for each $a \colon i \to j$ in $I$,
$\calK(\Mod X)(a)(\calK_p(\Mod X(i)) \subseteq \calK_p(\Mod X(j))$.
\end{lem}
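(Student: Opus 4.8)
The plan is to combine the adjunction $(\Mod X)(a) \dashv (\Mod'X)(a)$ used in the construction of the module oplax functor with the elementary fact that the restriction functor $(\Mod'X)(a)$ is exact, hence preserves acyclicity.

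First I would record, using Example~\ref{KMod}, that $\calK(\Mod X)(a)$ is the functor
$$F := (\text{-})\otimes_{X(i)}\ovl{X(a)} \colon \calK(\Mod X(i)) \to \calK(\Mod X(j))$$
obtained by applying the additive functor $(\Mod X)(a)$ degreewise to complexes and to homotopy classes of chain maps; and that, by construction of $\Mod X$, the functor $(\Mod X)(a)$ is a left adjoint to the restriction functor $(\Mod'X)(a) = (\text{-})\circ X(a)$. Since an adjunction between additive categories is carried over degreewise to an adjunction between their categories of complexes, and since both $(\Mod X)(a)$ and $(\Mod'X)(a)$ are additive (so that the resulting unit and counit are compatible with chain homotopies), this adjunction descends to the homotopy categories. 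Thus, letting $G \colon \calK(\Mod X(j)) \to \calK(\Mod X(i))$ denote the functor induced on homotopy categories by $(\Mod'X)(a)$, there is a natural isomorphism
$$\calK(\Mod X(j))(FM, N) \cong \calK(\Mod X(i))(M, GN)$$
for all $M \in \calK(\Mod X(i))$ and $N \in \calK(\Mod X(j))$.

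Next I would observe that $G$ sends acyclic complexes to acyclic complexes. Indeed, the restriction functor $(\text{-})\circ X(a) \colon \Mod X(j) \to \Mod X(i)$ is exact, because exactness of a sequence of $X(j)$-modules is tested objectwise and $(\text{-})\circ X(a)$ merely evaluates at the objects $X(a)(x)$, $x \in X(i)_0$; hence $G$ commutes with the cohomology functors $H^n$, so $GA$ is acyclic whenever $A$ is.

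Putting these together finishes the argument: given $M \in \calK_p(\Mod X(i))$ and an acyclic object $A$ of $\calK(\Mod X(j))$, the complex $GA$ is acyclic, so
$$\calK(\Mod X(j))(\calK(\Mod X)(a)(M), A) \cong \calK(\Mod X(i))(M, GA) = 0$$
by the defining property of $\calK_p(\Mod X(i))$. Hence $\calK(\Mod X)(a)(M)$ lies in $\calK_p(\Mod X(j))$, which is exactly the claimed inclusion. The only point that needs a little care is the passage of the module-level adjunction to the homotopy categories, but this is the standard routine verification, and I do not anticipate any genuine obstacle here.
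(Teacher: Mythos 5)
Your argument is correct and is essentially the paper's own proof: both pass the adjunction $(\Mod X)(a)\dashv(\Mod'X)(a)$ to the homotopy categories, use exactness of the restriction functor to see that $A\circ X(a)$ stays acyclic, and conclude via the defining property of $\calK_p$. The extra detail you supply on why the adjunction descends to homotopy categories is a fair elaboration of a step the paper leaves implicit, but it does not change the route.
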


\begin{proof}
Let $A$ be an acyclic complex in $\calK(\Mod X(j))$ and
let $P \in \calK_p(\Mod X(i))$.
Note that
$\calK(\Mod X)(a)$ is a left adjoint to $\calK(\Mod' X)(a)$
by the definition of $\calK(\Mod X)$ in Example \ref{KMod}.
Then we have
$\calK(\Mod X(j))(\calK(\Mod X)(a)(P), A)\iso \calK(\Mod X(i))(P, A\circ X(a))=0$
because $A\circ X(a)$ is acyclic in $\calK(\Mod X(i))$.
Thus $\calK(\Mod X)(a)(P) \in \calK_p(\Mod X(j))$.
\end{proof}

This enables us to define the following by Lemma \ref{oplax-subfunctor}.

\begin{dfn}
Let $X=(X, \et, \th) \in \Oplax(I, \kCat)$.
Then we define an oplax functor $\calK_p(\Mod X) \in \Oplax(I, \kTri)$
as the oplax subfunctor of $\calK(\Mod X)$ defined by $\calK_p(\Mod X(i))$
($i \in I_0$).
\end{dfn}

\begin{lem}
\label{adjoint-oplax}
Let $X=(X, \et, \th) \in \Oplax(I, \kCat)$.
For each $i \in I_0$, let
$Y(i) \in \kCat_0$ and
assume that we have an adjoint pair $\xymatrix@C=1em{L_i \ar@{-|}[r]& R_i}$
of functors $L_i \colon X(i) \to Y(i)$
and $R_i \colon Y(i) \to X(i)$ 
with a unit $\ep_i \colon \id_{X(i)} \Rightarrow R_iL_i$ and
a counit $\ze_i \colon L_iR_i \Rightarrow \id_{Y(i)}$.

Define $(Y, \et', \th')$ as follows.
\begin{itemize}
\item
Define
$Y(a) \colon Y(i) \to Y(j)$ as $Y(a):= L_jX(a)R_i$
for each $a \colon i \to j$ in $I$.
\item
For each $i \in I_0$, define $\et'_i \colon Y(\id_i) \Rightarrow \id_{Y(i)}$
as the composite
$$
\xymatrix@1{Y(\id_i)=L_iX(\id_i)R_i \ar@{=>}[r]^(.7){L_i\et_iR_i}&
L_iR_i\ar@{=>}[r]^{\ze_i}&\id_{Y(i)}
}
$$
and set $\et':=(\et'_i)_{i\in I_0}$;
\item
For each $i \ya{a} j \ya{b} k$ in $I$,
define $\th'_{b,a} \colon Y(ba) \Rightarrow Y(b)Y(a)$
as the composite
$$
\xymatrix@C=2em@R=1ex{
Y(ba)=L_kX(ba)R_i \ar@{=>}[r]^{L_k\th_{b,a}R_i}& L_kX(b)X(a)R_i\\
\ar@{=>}[r]^(.25){L_kX(b)\ep_jX(a)R_j}& L_kX(b)R_jL_jX(a)R_i=Y(b)Y(a).
}
$$
and set $\th':=(\th'_{b,a})$.
\end{itemize}
Then
\begin{enumerate}
\item
$Y = (Y, \et', \th') \in \Oplax(I, \kCat)$.
We say that this $Y$ is the oplax functor {\em induced from} $X$ {\em by}
adjoint pairs
$L_i, R_i$ $(i\in I_0)$.
\item
The family $(R_i)_{i\in I_0}$ is extended to a morphism
$$
R=(R, \ph^R) \colon Y \to X \quad\text{in } \Oplax(I,\kCat)
$$ by defining
$R(i):=R_i$ for all $i\in I_0$ and
$\ph^R(a):= \ep_j X(a)R_i$ for all $a\colon i \to j$ in $I$.
\item
Assume further that $\ep_i$ is an isomorphism for each $i \in I_0$.
Then
\begin{enumerate}
\item
$\ph^R(a)$ is an isomorphism for each $a \in I_1$, i.e., $R$ is
$I$-equivariant; and
\item
the family $(L_i)_{i\in I_0}$ is extended to an $I$-equivariant morphism
$$L=(L, \ph^L)\colon X \to Y
\quad\text{in } \Oplax(I, \kCat)
$$ by defining
$L(i):=L_i$ for all $i \in I_0$ and
$\ph^L(a):=L_j X(a)\ep_i\inv$ for all $a\colon i \to j$ in $I$.
\end{enumerate}
\end{enumerate}
\end{lem}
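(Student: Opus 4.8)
The plan is to verify the relevant defining axioms directly in each of the three parts, the uniform point being that every diagram one must show to commute decomposes into squares of only three types: a naturality square for one of the units $\ep_i$ or counits $\ze_i$ (an instance of the interchange law for horizontal and vertical composition of $2$-morphisms in $\kCat$), a triangle identity for one of the adjunctions $L_i\dashv R_i$, and an instance of one of the axioms for the oplax functor $X$. That each $Y(a)=L_jX(a)R_i$ is a functor and each $\et'_i$, $\th'_{b,a}$ a $2$-morphism is clear from the definitions, since these are just composites of functors and of $2$-morphisms; so the content is entirely in the axioms.

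For (1) I would begin with axiom (a) of Definition~\ref{dfn:oplax-fun}. Unfolding the definitions of $\th'_{a,\id_i}$ and of $\et'_i$, the clockwise composite $Y(a)\et'_i\circ\th'_{a,\id_i}$ starts with $L_j\th_{a,\id_i}R_i$ followed by $L_jX(a)\ep_iX(\id_i)R_i$. Applying naturality of $\ep_i$ to the $2$-morphism $\et_i\colon X(\id_i)\To\id_{X(i)}$ lets one rewrite $L_jX(a)R_iL_i\et_iR_i\circ L_jX(a)\ep_iX(\id_i)R_i$ as $L_jX(a)\ep_iR_i\circ L_jX(a)\et_iR_i$; then $L_jX(a)R_i\ze_i\circ L_jX(a)\ep_iR_i$ is the identity by the triangle identity $R_i\ze_i\circ\ep_iR_i=\id_{R_i}$ whiskered by $L_jX(a)$, and $L_jX(a)\et_iR_i\circ L_j\th_{a,\id_i}R_i$ is the identity by axiom (a) for $X$ (the equality $X(a)\et_i\circ\th_{a,\id_i}=\id_{X(a)}$) whiskered by $L_j$ and $R_i$; so the whole composite is the identity. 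The second triangle of axiom (a) is symmetric, using $\et_jX(a)\circ\th_{\id_j,a}=\id$ instead. Axiom (b) is the same procedure on a larger diagram: after unfolding $\th'_{c,ba}$, $Y(c)\th'_{b,a}$, $\th'_{cb,a}$ and $\th'_{c,b}Y(a)$ into strings of $2$-morphisms from $L_lX(cba)R_i$ to $L_lX(c)R_kL_kX(b)R_jL_jX(a)R_i$, repeated use of naturality of $\ep_j$ and of $\ep_k$ brings both the clockwise and the anticlockwise composites into a common normal form in which the only remaining discrepancy is $L_l\bigl(X(c)\th_{b,a}\circ\th_{c,ba}\bigr)R_i$ versus $L_l\bigl(\th_{c,b}X(a)\circ\th_{cb,a}\bigr)R_i$, and these agree by axiom (b) for $X$.

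For (2), a $1$-morphism $Y\to X$ in $\Oplax(I,\kCat)$ is a pair $(R,\ph^R)$ with $R(i)=R_i$ and $\ph^R(a)\colon X(a)R_i\To R_jL_jX(a)R_i=R(j)Y(a)$, so $\ph^R(a)=\ep_jX(a)R_i$ makes sense; axiom (a) for $(R,\ph^R)$ follows from naturality of $\ep_i$ together with the triangle identity $R_i\ze_i\circ\ep_iR_i=\id_{R_i}$, exactly as in the last step of part (1), and axiom (b) from two successive applications of naturality of $\ep_k$. For (3)(a), $\ph^R(a)=\ep_jX(a)R_i$ is a whiskering of the isomorphism $\ep_j$, hence an isomorphism, so $R$ is $I$-equivariant. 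For (3)(b), put $\ph^L(a):=L_jX(a)\ep_i\inv\colon Y(a)L_i=L_jX(a)R_iL_i\To L_jX(a)=L(j)X(a)$; it is a whiskering of the isomorphism $\ep_i\inv$, hence an isomorphism, and the two $1$-morphism axioms for $(L,\ph^L)$ are checked by the same pattern, the one extra ingredient being the identity $\ze_iL_i=L_i\ep_i\inv$, which follows from the triangle identity $\ze_iL_i\circ L_i\ep_i=\id_{L_i}$ by composing on the right with the isomorphism $L_i\ep_i\inv$. The only genuine obstacle is the size of the diagram for axiom (b) in part (1); conceptually nothing new happens there, so I would isolate it and present it as a single commutative diagram subdivided into naturality squares for the $\ep$'s, triangle identities, and one instance of axiom (b) for $X$, in the style of the proof of the preceding proposition on adjoint pairs, and dispatch the remaining shorter checks as straightforward.
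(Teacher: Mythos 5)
Your proposal is correct and follows essentially the same route as the paper: the paper proves part (1) by exhibiting exactly the commutative diagrams you describe (axiom (a) via a naturality square for $\ep_i$, the triangle identity $R_i\ze_i\circ\ep_iR_i=\id_{R_i}$, and axiom (a) for $X$, all suitably whiskered; axiom (b) via a $3\times 3$ grid whose top-left square is axiom (b) for $X$ and whose remaining squares are interchange squares for $\ep_j$ and $\ep_k$), and it dismisses parts (2) and (3) as straightforward. Your sketch of (2) and (3), including the identity $\ze_iL_i=L_i\ep_i^{-1}$ needed for $(L,\ph^L)$, correctly fills in what the paper leaves to the reader.
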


\begin{proof}
(1) For each $a \colon i \to j$ in $I$,
the axiom (a$_1$) follows from the following commutative diagram:
$$
\xymatrix{
L_jX(a\id_i)R_i & L_jX(a)X(\id_i)R_i & L_jX(a)R_iL_iX(\id_i)R_i\\
&L_jX(a)R_i & L_jX(a)R_iL_iR_i\\
&& L_jX(a)R_i
\ar@{=>}^(.45){\sim\th_{a,\id_i}\sim}"1,1";"1,2"
\ar@{=>}^(.45){\sim\ep_{i}\sim}"1,2";"1,3"
\ar@{=>}^{\sim\ep_{i}\sim}"2,2";"2,3"
\ar@{=>}^{\sim\et_{i}\sim}"1,2";"2,2"
\ar@{=>}^{\sim\et_{i}\sim}"1,3";"2,3"
\ar@{=>}^{\sim\ze_{i}}"2,3";"3,3"
\ar@{=}"1,1";"2,2"
\ar@{=}"2,2";"3,3"
}
$$

The axiom (a$_2$) follows similarly.

For each $i\ya{a} j \ya{b} k \ya{c} l$ in $I$,
the axiom (b) follows from the following commutative diagram:
$$
\xymatrix{
L_lX(cba)R_i & L_lX(c)X(ba)R_i & L_lX(c)R_kL_kX(ba)R_i\\
L_lX(cb)X(a)R_i & L_lX(c)X(b)X(a)R_i & L_lX(c)R_kL_kX(b)X(a)R_i\\
L_lX(cb)R_jL_jX(a)R_i & L_lX(c)X(b)R_jL_jX(a)R_i & L_lX(c)R_kL_kX(b)R_jL_jX(a)R_i.
\ar@{=>}^(.45){\sim\th_{c,ba}\sim}"1,1";"1,2"
\ar@{=>}^(.45){\sim\ep_{k}\sim}"1,2";"1,3"
\ar@{=>}^(.45){\sim\th_{c,b}\sim}"2,1";"2,2"
\ar@{=>}^(.45){\sim\ep_{k}\sim}"2,2";"2,3"
\ar@{=>}^(.45){\sim\th_{c,b}\sim}"3,1";"3,2"
\ar@{=>}^(.45){\sim\ep_{k}\sim}"3,2";"3,3"
\ar@{=>}_{\sim\th_{cb,a}\sim}"1,1";"2,1"
\ar@{=>}^{\sim\th_{b,a}\sim}"1,2";"2,2"
\ar@{=>}^{\sim\th_{b,a}\sim}"1,3";"2,3"
\ar@{=>}_(.45){\sim\ep_{j}\sim}"2,1";"3,1"
\ar@{=>}^(.45){\sim\ep_{j}\sim}"2,2";"3,2"
\ar@{=>}^(.45){\sim\ep_{j}\sim}"2,3";"3,3"
}
$$

(2) and (3)  Straightforward.
\end{proof}

\begin{dfn}
Let $X=(X, \et, \th) \in \Oplax(I, \kCat)$.
\begin{enumerate}
\item
For each $i \in I_0$ let $L_i$ be the composite
$\calK_p(\Mod X(i)) \incl \calK(\Mod X(i)) \to \calD(\Mod X(i))$
of the embedding and the quotient functor.
Then it is well-known that $L_i$ is a triangle equivalence with
a quasi-inverse
$R_i \colon \calD(\Mod X(i)) \to \calK_p(\Mod X(i))$.
Then we define the {\em derived oplax functor}
$$
\calD(\Mod X) \in \Oplax(I, \kTri)
$$
of $X$ as the oplax functor induced from
the oplax functor $\calK_p(\Mod X)$
by the adjoint pairs $L_i, R_i$ ($i \in I_0$).
\item
We define an oplax functor $\perf X \in \Oplax(I, \kTri)$
as the oplax subfunctor of $\calD(\Mod X)$
defined by the subcategories $\perf X(i)$ of $\calD(\Mod X(i))$ ($i \in I_0$)
consisting of the {\em perfect complexes} $M$, i.e.,
the objects $M$ such that $\calD(\Mod X(i))(M, \blank)$
commutes with arbitrary (set-indexed) direct sums.
\end{enumerate}
\end{dfn}

\begin{rmk}
\label{D-form}
In the above, we have the following by definition.
\begin{enumerate}
\item
For each $a\colon i \to j$ in $I$,
$\calD(\Mod X)(i) \xrightarrow{\calD(\Mod X)(a)} \calD(\Mod X)(j)$
is equal to
$$\calD(\Mod X(i)) \xrightarrow{\text{-}\Lox_{X(i)}\ovl{X(a)}}\calD(\Mod X(j)).$$
\item
$\perf X$ and $\Kb(\prj X)$ are equivalent
in the 2-category $\Oplax(I, \kTri)$ in the sense recalled in Section 5.
\end{enumerate}
\end{rmk}

In the subsequent paper \cite{Asa} we will give a unified way to define oplax functors
$\Mod X$, $\calK(\Mod X)$, \ldots, $\calD(\Mod X)$ for $X \in \Oplax(I, \kCat)$
using composites of oplax functors and
pseudofunctors between 2-categories.
\section{Derived equivalences of oplax functors}

Recall that a 1-morphism $f \colon x \to y$ in a 2-category $\bfC$
is called an {\em equivalence} if there exist 1-morphism $g \colon y \to x$
in $\bfC$ such that there exist 2-isomorphisms
$gf \Rightarrow \id_x$ and $fg \Rightarrow \id_y$ in $\bfC$;
and that two objects $x$ and $y$ in $\bfC$ are called
{\em equivalent} in $\bfC$ if there exists an equivalence $f \colon x \to y$.

\begin{lem}
\label{oplax-eq}
Let $\bfC$ be a $2$-category and $(F, \ps) \colon X \to X'$
a $1$-morphism in the $2$-category $\Oplax(I, \bfC)$.
Then $(F, \ps)$ is an equivalence in $\Oplax(I, \bfC)$
if and only if
\begin{enumerate}
\item
For each $i \in I_0$, $F(i)$
is an equivalence in $\bfC$; and
\item
For each $a \in I_1$, $\ps(a)$ is a $2$-isomorphism in $\bfC$
$($namely, $(F,\ps)$ is $I$-equivariant$)$.
\end{enumerate}
\end{lem}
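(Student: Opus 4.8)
The plan is to prove the two implications separately. The forward implication is short: a pointwise observation gives~(1), and a small cancellation argument then gives~(2). The converse is the substantial part: one must build an explicit quasi-inverse of $(F,\psi)$ inside $\Oplax(I,\bfC)$, and most of the labour is the (routine but lengthy) verification of the coherence axioms for that quasi-inverse. Throughout I write $\circ$ for vertical composition of $2$-cells, in the same order used for composition of $1$-morphisms in the definitions above (so $\beta\circ\alpha$ means ``first $\alpha$, then $\beta$''), and I use repeatedly that a $2$-morphism in $\Oplax(I,\bfC)$ is a $2$-isomorphism exactly when all of its components are.

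For necessity, suppose $(F,\psi)$ is an equivalence, with a quasi-inverse $(G,\phi)\colon X'\to X$ and $2$-isomorphisms $\zeta\colon(G,\phi)(F,\psi)\To\id_X$ and $\xi\colon(F,\psi)(G,\phi)\To\id_{X'}$. Then every component $\zeta(i)\colon G(i)F(i)\To\id_{X(i)}$ and $\xi(i)\colon F(i)G(i)\To\id_{X'(i)}$ is a $2$-isomorphism, so each $F(i)$ — and symmetrically each $G(i)$ — is an equivalence in $\bfC$; this is~(1). For~(2), fix $a\colon i\to j$. The defining commutative square of the $2$-morphism $\zeta$ at $a$, together with the fact that the structure cell of $\id_X$ at $a$ is $\id_{X(a)}$, identifies the composite structure cell $(\phi\circ\psi)(a)=G(j)\psi(a)\circ\phi(a)F(i)$ with $\zeta(j)\inv X(a)\circ X(a)\zeta(i)$, hence exhibits it as a $2$-isomorphism; symmetrically, the square for $\xi$ makes $(\psi\circ\phi)(a)=F(j)\phi(a)\circ\psi(a)G(i)$ a $2$-isomorphism. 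Whiskering the first of these identities on the right by $G(i)$ and the second on the left by $G(j)$, and using that whiskering distributes over vertical composition, I obtain that the single $2$-cell $G(j)\psi(a)G(i)$ has a right inverse and a left inverse, hence is invertible. Finally, whiskering by an equivalence reflects invertibility of $2$-cells (if $e$ is an equivalence and $e\gamma$ is invertible, then so is $e'e\gamma$ for a quasi-inverse $e'$, and naturality of a $2$-isomorphism $e'e\To\id$ forces $\gamma$ invertible; likewise on the other side); removing $G(i)$ on the right and $G(j)$ on the left from $G(j)\psi(a)G(i)$ therefore leaves $\psi(a)$ invertible, which is~(2).

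For sufficiency, assume~(1) and~(2). For each $i\in I_0$ fix an adjoint equivalence $(F(i),G(i),\ep_i,\ze_i)$ with unit $\ep_i\colon\id_{X(i)}\To G(i)F(i)$ and counit $\ze_i\colon F(i)G(i)\To\id_{X'(i)}$ (every equivalence upgrades to one). I define a $1$-morphism $(G,\phi)\colon X'\to X$ by this family of functors together with, for each $a\colon i\to j$, the $2$-cell $\phi(a)\colon X(a)G(i)\To G(j)X'(a)$ given by the pasting
\[
\phi(a):=\bigl(G(j)X'(a)\ze_i\bigr)\circ\bigl(G(j)\psi(a)\inv G(i)\bigr)\circ\bigl(\ep_j X(a)G(i)\bigr),
\]
which is a $2$-isomorphism because each of its three factors is — this is where~(2) is used. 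It then remains to check (i) that $(G,\phi)$ satisfies the $1$-morphism axioms (a) and (b), and (ii) that the families $\ep:=(\ep_i)_{i\in I_0}$ and $\ze:=(\ze_i)_{i\in I_0}$ are $2$-morphisms $\id_X\To(G,\phi)(F,\psi)$ and $(F,\psi)(G,\phi)\To\id_{X'}$, respectively. Each of (i) and (ii) reduces to the commutativity of a pasting diagram that is forced by the oplax-functor axioms for $X$ and $X'$, the $1$-morphism axioms for $(F,\psi)$, the two triangle identities for the chosen adjoint equivalences, and the interchange law; these are diagram chases of the same flavour as the ones in the proof of \lemref{adjoint-oplax}. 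Once (ii) is established, $\ep$ and $\ze$ are componentwise $2$-isomorphisms, hence $2$-isomorphisms in $\Oplax(I,\bfC)$, so $(F,\psi)$ is an equivalence with quasi-inverse $(G,\phi)$.

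I expect the main obstacle to be precisely this last verification: checking axiom (b) for $(G,\phi)$ and checking that $\ep$ and $\ze$ are $2$-morphisms are the steps in which the oplax structures of both $X$ and $X'$, the transformation $\psi$, and the two triangle identities all have to be used simultaneously, so the corresponding pasting diagrams, although entirely formal, are the bulkiest part of the argument. By contrast the cancellation step in the necessity proof, while the subtlest conceptual point, is short once the two structure-cell identities coming from $\zeta$ and $\xi$ have been extracted.
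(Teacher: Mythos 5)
Your proof is correct and follows essentially the same route as the paper's: for necessity you extract pointwise equivalences and then cancel whiskerings by the quasi-inverse to invert $\ps(a)$ (the paper phrases this as $\ps(a)$ being simultaneously a $2$-section and a $2$-retraction, but the content is identical), and for sufficiency you define the quasi-inverse's structure cells by exactly the same three-factor pasting, using an adjoint equivalence at each object. The coherence verifications you defer are likewise left as routine in the paper, so nothing essential is missing.
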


\begin{proof}
Set $X = (X, \et, \th)$ and $X' = (X', \et', \th')$.

(\implies). 
Assume that $(F, \ps)$ is an equivalence in $\Oplax(I, \bfC)$.
Then there exists a 1-morphism $(E, \ph) \colon X' \to X$ and 2-isomorphisms
$$
\begin{aligned}
\ze \colon (\id_X, (\id_{X(a)})_a) &\Longrightarrow (E,\ph)\circ (F,\ps) 
= (EF, (E(j)\ps(a)\circ \ph(a)F(i))_{a:i\to j})\text{ and}\\
\ze' \colon (\id_X', (\id_{X'(a)})_a) &\Longrightarrow (F,\ps)\circ (E,\ph)
= (FE, (F(j)\ph(a)\circ \ps(a)E(i))_{a:i\to j})
\end{aligned}
$$
in $\Oplax(I, \bfC)$.
Then the assumption first shows that for each $i \in I_0$ we have 2-isomorphisms
$\ze(i) \colon \id_{X(i)} \Longrightarrow E(i)F(i)$ and
$\ze'(i) \colon \id_{X'(i)} \Longrightarrow F(i)E(i)$ in $\bfC$.
Thus $F(i)$ is an equivalence in $\bfC$ with a quasi-inverse $E(i)$.

Next the assumption shows that for each $a \colon i \to j$ in $I$
we have the following commutative diagrams of 2-morphisms in $\bfC$:
\begin{equation}
\label{2-iso}
\xymatrix{
X(a)\id_{X(i)} & X(a)E(i)F(i)\\
\id_{X(j)}X(a) & E(j)F(j)X(a)
\ar@{=>}^(.45){X(a)\ze(i)}_(.45){\sim}"1,1";"1,2"
\ar@{=>}_(.45){\ze(j)X(a)}^(.45){\sim}"2,1";"2,2"
\ar@{=>}_(.45){\id_{X(a)}}"1,1";"2,1"
\ar@{=>}^(.45){E(j)\ps(a)\circ \ph(a)F(i)}"1,2";"2,2"
}
\quad
\xymatrix{
X'(a)\id_{X'(i)} & X'(a)F(i)E(i)\\
\id_{X'(j)}X'(a) & F(j)E(j)X'(a)
\ar@{=>}^(.45){X'(a)\ze'(i)}_(.45){\sim}"1,1";"1,2"
\ar@{=>}_(.45){\ze'(j)X'(a)}^(.45){\sim}"2,1";"2,2"
\ar@{=>}_(.45){\id_{X'(a)}}"1,1";"2,1"
\ar@{=>}^(.45){F(j)\ph(a)\circ \ps(a)E(i)}"1,2";"2,2"
}
\end{equation}
Hence both $E(j)\ps(a)\circ \ph(a)F(i)$ and $F(j)\ph(a)\circ \ps(a)E(i)$
are 2-isomorphisms in $\bfC$, and hence so are
both $F(j)E(j)\ps(a)\circ F(j)\ph(a)F(i)$
and $F(j)\ph(a)F(i)\circ \ps(a)E(i)F(i)$.
Thus $F(j)E(j)\ps(a)$ is a 2-retraction and
$\ps(a)E(i)F(i)$ is a 2-section.
Hence $\ps(a)$ is a 2-isomorphism in $\bfC$
because both $\ze(i)$ and $\ze'(j)$
are 2-isomorphisms.

(\impliedby).
Conversely, assume that $(F, \ps)$ satisfies the conditions (1) and (2).
Then by (1), for each $i \in I_0$ there exists a quasi-inverse $E(i)$
of $F(i)$, thus there exist 2-isomorphisms
$\ze_i\colon \id_{X(i)} \Longrightarrow E(i)F(i)$ and
$\ze'_i \colon \id_{X'(i)} \Longrightarrow F(i)E(i)$
satisfying the following equations:
\begin{align}
E(i)\ze'_i &= \ze_i E(i) \label{form-b'}\\
F(i)\ze_i &= \ze'_iF(i) \label{form-a'}
\end{align}

By (2) we can construct a $\ph:= (\ph(a))_{a\in I_1}$
by the following
commutative diagram for each $a\colon i \to j$ in $I$:
$$
\xymatrix@C=15ex{
X(a)E(i) & E(j)X'(a)\\
E(j)F(j)X(a)E(i) & E(j)X'(a)F(i)E(i).
\ar@{=>}^{\ph(a)}"1,1";"1,2"
\ar@{=>}_{E(j)\ps(a)\inv E(i)}"2,1";"2,2"
\ar@{=>}_{\ze(j)X(a)E(i)}"1,1";"2,1"
\ar@{=>}_{E(j)X'(a)\ze'(i)\inv}"2,2";"1,2"
}
$$
It is enough to show the following
\begin{align}\label{oplax-mor}
(E, \ph)&\colon (X',\et',\th')\to (X, \et, \th)
\text{ is in $\Oplax(I,\kCat)$;}\\
\label{z-2iso}
\ze:= (\ze(i))_{i\in I_0}&\colon (\id_X, (\id_{X(a)})_{a})
\Longrightarrow (E,\ph)\circ (F, \ps)
\text{ is a 2-isomorphism; and}\\
\label{z'-2iso}
\ze':=(\ze'(i))_{i\in I_0}&\colon (\id_{X'}, (\id_{X'(a)})_{a})
\Longrightarrow (F, \ps)\circ (E,\ph)
\text{ is a 2-isomorphism.}
\end{align}
Using \eqref{form-b'} it is not hard to verify the
statement \eqref{oplax-mor}.
The statements \eqref{z-2iso} and \eqref{z'-2iso}
are equivalent to the commutativity of the left diagram
and of the right diagram in \eqref{2-iso},
respectively, and both follow from \eqref{form-a'}.
\end{proof}

\begin{dfn}
Let $X, X' \in \Oplax(I, \kCat)$.
Then $X$ and $X'$ are said to be {\em derived equivalent} if
$\calD(\Mod X)$ and $\calD(\Mod X')$ are equivalent
in the 2-category $\Oplax(I, \kTri)$.
\end{dfn}

By Lemma \ref{oplax-eq} we obtain the following.

\begin{prp}
\label{der-eq-criterion}
Let $X, X' \in \Oplax(I, \kCat)$.
Then $X$ and $X'$ are derived equivalent if and only if
there exists a 1-morphism
$(F, \ps) \colon \calD(\Mod X) \to \calD(\Mod X')$ in $\Oplax(I, \kTri)$ such that
\begin{enumerate}
\item
For each $i \in I_0$, $F(i)$
is a triangle equivalence; and
\item
For each $a \in I_1$, $\ps(a)$ is a natural isomorphism
$($i.e., $(F,\ps)$ is $I$-equivariant$)$.
\end{enumerate}
\end{prp}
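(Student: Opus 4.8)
The plan is to obtain this immediately from \lemref{oplax-eq}. First I would unwind the definition just given: to say that $X$ and $X'$ are derived equivalent means exactly that $\calD(\Mod X)$ and $\calD(\Mod X')$ are equivalent objects of the $2$-category $\Oplax(I,\kTri)$, i.e.\ that there exists a $1$-morphism $(F,\ps)\colon \calD(\Mod X)\to \calD(\Mod X')$ that is an equivalence in $\Oplax(I,\kTri)$. Thus the proposition is just the statement that such an equivalence exists if and only if a $1$-morphism satisfying (1) and (2) exists.

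Next I would apply \lemref{oplax-eq} with $\bfC=\kTri$: that lemma characterizes the equivalences $(F,\ps)$ in $\Oplax(I,\bfC)$ as precisely those $1$-morphisms for which $F(i)$ is an equivalence in $\bfC$ for every $i\in I_0$ and $\ps(a)$ is a $2$-isomorphism in $\bfC$ for every $a\in I_1$. So the only remaining work is to translate these two conditions into the asserted ones in the specific $2$-category $\bfC=\kTri$ of small triangulated $\k$-categories. Its $1$-morphisms are the triangle functors and its $2$-morphisms are the natural transformations of such functors, so a $2$-isomorphism in $\kTri$ is nothing but a natural isomorphism; this is condition (2). An equivalence in $\kTri$ is a triangle functor admitting a triangle functor as a two-sided quasi-inverse, and by the standard fact that any quasi-inverse of a triangle equivalence is automatically triangulated, this is the same as being a triangle equivalence; this is condition (1). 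Combining these identifications with the reformulation of the previous paragraph yields the proposition.

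I do not expect any genuine obstacle here: all the substance is carried by \lemref{oplax-eq}, and the present statement is obtained by rephrasing that lemma through the definition of derived equivalence together with the explicit (and routine) description of equivalences and $2$-isomorphisms in $\kTri$. If one wished to be fully careful, the single point deserving an extra sentence is the remark that a quasi-inverse of a triangle equivalence is again a triangle functor, so that ``equivalence in $\kTri$'' coincides with ``triangle equivalence'' in the usual sense.
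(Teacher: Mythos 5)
Your proposal is correct and matches the paper exactly: the paper derives this proposition directly from Lemma~\ref{oplax-eq} (it is prefaced only by ``By Lemma~\ref{oplax-eq} we obtain the following'' with no further argument), and your unwinding of the definition of derived equivalence plus the identification of equivalences and $2$-isomorphisms in $\kTri$ is precisely the intended reasoning. Your extra remark that a quasi-inverse of a triangle equivalence is again a triangle functor is a reasonable point of care that the paper leaves implicit.
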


A $\k$-category $\calA$ is called $\k$-{\em projective} if
$\calA(x,y)$ are projective $\k$-modules for all $x,y \in \calA_0$.
We formulate a categorical version of Keller's lifting theorem
\cite[Theorem 2.1]{Ke2} (in the $\k$-projective case) as follows,
a proof of which is given by B.\ Keller in Appendix.

\begin{thm}[Keller]
\label{keller-lifting}
Let $\calA, \calB \in \kCat$,
and let $T \colon \calA \to \calK(\Prj \calB)$
be a $\k$-functor factoring through the inclusion
$\calK^-(\Prj \calB) \to \calK(\Prj \calB)$.
Set $\nu \colon \calC(\Prj \calB) \to \calK(\Prj \calB)$
and $Q \colon \calK(\Prj \calB) \to \calD(\Mod \calB)$
to be the canonical functors, and put $\pi:= Q\nu$.
Assume that $\calA$ is $\k$-projective
and that $T$ satisfies the {\em Toda condition}:
$$
\calK(\Mod \calB)(T(x), T(y)[n]) = 0, \forall n < 0,
\forall x, y \in \calA_0.
$$
Then the following hold.
\begin{enumerate}
\item[(a)]
There exists a $\k$-functor $B \colon \calA \to \calC(\Prj \calB)$
factoring through the inclusion $\calC^-(\Prj \calB) \to \calC(\Prj \calB)$
and a natural transformation $q \colon T \Longrightarrow \nu B$
such that $q_x \colon T(x) \to \nu B(x)$ are quasi-isomorphisms
for all $x \in \calA_0$.
\item[(b)]
If $B' \colon \calA \to \calC(\Prj \calB)$ is a $\k$-functor 
factoring through the inclusion $\calC^-(\Prj \calB) \to \calC(\Prj \calB)$ and $q' \colon T \Longrightarrow \nu B'$ is a natural transformation 
such that $q'_x \colon T(x) \to \nu B'(x)$ are quasi-isomorphisms
for all $x \in \calA_0$,
then there exists a natural transformation $p \colon B \to B'$
such that $q'=(\nu p) \circ q$ and that $\pi p \colon \pi B \to \pi B'$
is an isomorphism in $\Fun(\calA, \calD(\Mod \calB))%
$.
Further if a natural transformation $p' \colon B \to B'$
has the same property as $p$, then $\nu p = \nu p'$,
i.e., $p$ is unique up to homotopy.
\end{enumerate}

\end{thm}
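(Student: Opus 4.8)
\medskip

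\textbf{Proof proposal.}
The plan is to recast the statement as a lifting problem for dg functors and to resolve it by obstruction theory, the Toda condition being precisely what makes the relevant obstruction groups vanish. Write $\calE_{\mathrm{dg}}$ for the dg category of bounded-above complexes of projective $\calB$-modules with its usual Hom-complexes, so that $\calC^-(\Prj\calB)$ is its category of degree-$0$ cocycles $Z^0(\calE_{\mathrm{dg}})$, $\calK^-(\Prj\calB)=H^0(\calE_{\mathrm{dg}})$, and $\nu$, $\pi$ are the evident functors. Let $\calF$ denote the full dg subcategory of $\calE_{\mathrm{dg}}$ on the objects $T(x)$, $x\in\calA_0$ (with object set $\calA_0$ via $T$), so that $T$ is a $\k$-functor $\calA\to H^0(\calF)$. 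Regarding $\calA$ as a dg category concentrated in degree $0$ with no higher operations, a dg functor $\calA\to\calF$ is the same thing as a $\k$-functor $\calA\to Z^0(\calF)\subseteq\calC^-(\Prj\calB)$, which automatically factors through $\calC^-(\Prj\calB)\incl\calC(\Prj\calB)$. Thus part (a) amounts to lifting $T\colon\calA\to H^0(\calF)$ to a dg functor $B\colon\calA\to\calF$.

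\textbf{Existence.}
Replace $\calA$ by its standard semifree resolution $\widehat\calA\to\calA$ (a quasi-equivalence with object set $\calA_0$); since $\calA$ is $\k$-projective, the Hom-complexes of $\widehat\calA$ are $\k$-projective with cohomology concentrated in degree $0$, $\calA$ itself is cofibrant, and the quasi-equivalence admits a one-sided homotopy inverse $\si\colon\calA\to\widehat\calA$ which is the identity on objects. It therefore suffices to construct a dg functor $\widehat F\colon\widehat\calA\to\calF$ inducing $T$ on $H^0$, and then put $B:=\widehat F\si$; the natural transformation $q$ is the comparison induced by $\si$ on $H^0$ (its components are quasi-isomorphisms, and can in fact be taken to be identities). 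One builds $\widehat F$ by induction along the generating cells of $\widehat\calA$: it sends $x\mapsto T(x)$ on objects and each degree-$0$ generator to a chain map representing the corresponding value of $T$; extending it over a generator of cohomological degree $-m\le-1$ amounts to solving $d\,\widehat F(g)=\widehat F(dg)$ for $\widehat F(g)$ in a Hom-complex of $\calF$, which is possible because the cocycle $\widehat F(dg)$ has class in $H^{-m+1}(\Hom^\bullet_{\calE_{\mathrm{dg}}}(T(x),T(y)))=\calK(\Mod\calB)(T(x),T(y)[-m+1])$, and this group vanishes by the Toda condition for $m\ge 2$ and, for $m=1$, because the class equals $T(ab)-T(a)T(b)=0$ by functoriality of $T$. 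The indeterminacy propagated to later stages lies in the groups $\calK(\Mod\calB)(T(x),T(y)[n])$ with $n\le-1$, again killed by Toda, so no choice obstructs the continuation. This inductive construction is the technical core.

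\textbf{Uniqueness.}
Given another lift $(B',q')$, view $B,B'$ as dg functors $\calA\to\calF$ together with the identifications of their $H^0$ with $T$ given by $q,q'$. Running the same obstruction theory relative to this data — building a dg natural transformation $\widehat p\colon\widehat F\To\widehat F'$ cell by cell and transporting it back along $\si$ — yields a natural transformation $p\colon B\to B'$ of functors into $\calC^-(\Prj\calB)$ with $q'=(\nu p)\circ q$ and $\pi p$ an isomorphism, the obstructions and ambiguities again lying in $\calK(\Mod\calB)(T(x),T(y)[n])$, $n<0$. The same vanishing shows that any two such $p$ differ by a homotopy, i.e.\ $\nu p=\nu p'$.

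\textbf{Main obstacle.}
The crux is the bookkeeping in the inductive lifting step: organizing the generating cells of $\widehat\calA$ by homological degree and verifying that at each stage, and in each internal degree, the obstruction cocycle is a coboundary by nothing more than the Toda vanishing (and, at the first level, functoriality of $T$), all while staying $\k$-linear and inside bounded-above complexes. The remaining ingredients — the reformulation via $\calE_{\mathrm{dg}}$, the use of $\k$-projectivity to obtain $\si$, and the packaging of the comparisons into $q$ and $p$ — are routine once this is done. A more elementary variant would avoid $\widehat\calA$ and instead choose chain maps $B(f)$ between the $T(x)$ directly, encoding the failure of functoriality in a cochain whose primitive is supplied by the Toda condition; but the homotopical formulation seems the most transparent.

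\medskip
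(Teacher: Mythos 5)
Your overall strategy (rigidify $T$ by obstruction theory, with the Toda condition killing the obstruction groups) is the right heuristic, but the step you use to land back on $\calA$ is where the real difficulty sits, and as written it fails. You claim that because $\calA$ is $\k$-projective and concentrated in degree $0$, ``$\calA$ itself is cofibrant'' and the surjective quasi-equivalence $\widehat\calA\to\calA$ admits a dg section $\si\colon\calA\to\widehat\calA$. Neither is true: cofibrancy of a dg category means being a retract of a semifree one, i.e.\ free as a graded category up to retract, and an ordinary $\k$-category with projective Hom-modules is essentially never of this form (already a single algebra such as $\k[t]/(t^2)$ is a counterexample). Degreewise, $\k$-projectivity does give $\k$-linear sections $\calA(x,y)\to Z^0\widehat\calA(x,y)$, but these cannot in general be chosen compatibly with composition --- that incompatibility is exactly the strictification problem you are trying to solve. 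Consequently your cell-by-cell construction produces only a dg functor $\widehat F\colon\widehat\calA\to\calF$, equivalently an $A_\infty$-functor $\calA\to\calF$, and without $\si$ there is no way to convert it into the strict $\k$-functor $B\colon\calA\to\calC^-(\Prj\calB)$ that the theorem demands. The same issue recurs in your uniqueness argument, where the cellwise $\widehat p$ is only a homotopy-coherent transformation.

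The paper closes precisely this gap by a different device: it forms the dg endomorphism category $\calE$ of $T$, uses the Toda condition to make $\tau_{\leq 0}\calE\to H^0\calE$ a quasi-isomorphism, and then works with \emph{bimodules} rather than functors. The $\calA^{\mathrm{op}}\otimes_\k H^0\calE$-module $X_1(x,y)=(H^0\calE)(x,Fy)$ is restricted along $\calA^{\mathrm{op}}\otimes_\k\tau_{\leq 0}\calE\to\calA^{\mathrm{op}}\otimes_\k H^0\calE$ --- this is where $\k$-projectivity of $\calA$ is actually used, namely to keep this map a quasi-isomorphism --- and one sets $B(x)=X_2(?,x)$ with $X_2=(\mathbf{p}X_1)\otimes_{\tau_{\leq 0}\calE}T'$ for a cofibrant resolution $\mathbf{p}X_1$. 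Tensoring with a fixed cofibrant bimodule is strictly functorial in $x$, which is how one obtains an honest functor on $\calA$ without ever resolving $\calA$ as a dg category; the natural transformation $q$ then comes from comparing $(\tau_{\leq 0}\calE)(?,x)$ and $(\mathbf{p}X_1)(?,x)$ over $(H^0\calE)(?,x)$. To salvage your approach you would need to add exactly such a strictification step (a bar/bimodule resolution of your $A_\infty$-functor); the obstruction-theoretic part alone does not suffice.
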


\begin{dfn}
Let $X\colon I \to \kCat$ be an oplax functor.
\begin{enumerate}
\item
$X$ is called $\k$-{\em projective}
if $X(i)$ are $\k$-projective for all $i \in I_0$.
\item
An oplax subfunctor $\calT$ of of $\Kb(\prj X)$ is called
{\em tilting} if for each $i \in I_0$,
$\calT(i)$ is a tilting subcategory of $\Kb(\prj X(i))$, namely,
\begin{itemize}
\item
$\Kb(\prj X(i))(U, V[n]) = 0$ for all $U, V \in \calT(i)_0$
and $0 \ne n \in \bbZ$; and
\item
the smallest thick subcategory of $\Kb(\prj X(i))$
containing $\calT(i)$ is equal to $\Kb(\prj X(i))$.
\end{itemize}
\item A tilting oplax subfunctor $\calT$ of $\Kb(\prj X)$
with an $I$-equivariant inclusion
$(\si, \ro)\colon \calT \incl \Kb(\prj X)$
is called a {\em tilting oplax functor} for $X$.
\end{enumerate}
\end{dfn}

The following is our main result in this paper that gives
a generalization of the Morita type theorem characterizing derived equivalences of categories by Rickard \cite{Rick} and Keller \cite{Ke1} in our setting.

\begin{thm}
\label{mainthm1}
Let $X, X' \in \Oplax(I, \kCat)$.
Consider the following conditions.
\begin{enumerate}
\item
$X$ and $X'$ are derived equivalent.
\item
$\Kb(\prj X)$ and $\Kb(\prj X')$ are equivalent
in $\Oplax(I, \kTri)$.
\item
There exists a tilting oplax functor $\calT$ for $X$
such that $\calT$ and $X'$ are equivalent in $\Oplax(I, \kCat)$.
\end{enumerate}
Then 
\begin{enumerate}
\item[(a)]
$(1)$ implies $(2)$.
\item[(b)]
$(2)$ implies $(3)$.
\item[(c)]
If $X'$ is $\k$-projective, then $(3)$ implies $(1)$. 
\end{enumerate}
\end{thm}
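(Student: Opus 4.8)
The plan is to prove the three implications separately, handling (a) and (b) by relatively formal arguments and concentrating the real work on (c).

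For (a), I would observe that a derived equivalence is by definition an equivalence $(F,\ps)\colon \calD(\Mod X)\to\calD(\Mod X')$ in $\Oplax(I,\kTri)$. By Lemma \ref{oplax-eq} each $F(i)$ is a triangle equivalence and each $\ps(a)$ is a $2$-isomorphism. A triangle equivalence $\calD(\Mod X(i))\to\calD(\Mod X'(i))$ restricts to an equivalence on compact objects, hence on $\perf X(i)\to\perf X'(i)$; one needs to check that $\ps(a)$ restricts accordingly, which follows because $\ps(a)$ is a natural isomorphism and the $F(i)$ already match up the perfect subcategories. This yields an equivalence $\perf X\to\perf X'$ in $\Oplax(I,\kTri)$, and then Remark \ref{D-form}(2) identifies $\perf X\simeq\Kb(\prj X)$ and $\perf X'\simeq\Kb(\prj X')$ in $\Oplax(I,\kTri)$, giving (2).

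For (b), given an equivalence $(F,\ps)\colon\Kb(\prj X)\to\Kb(\prj X')$, I would set $\calT(i):=F(i)(\prj X'(i))$, the essential image in $\Kb(\prj X)(i)$ of the standard generators of $\Kb(\prj X'(i))$. Since $F(i)$ is a triangle equivalence and $\prj X'(i)$ is a tilting subcategory of $\Kb(\prj X'(i))$, each $\calT(i)$ is a tilting subcategory of $\Kb(\prj X(i))$; the hypothesis that $F$ is $I$-equivariant (the $\ps(a)$ are $2$-isomorphisms) shows that $X(a)$ maps $\calT(i)$ into $\calT(j)$, so by Lemma \ref{oplax-subfunctor} the family $(\calT(i))_i$ defines an oplax subfunctor $\calT$ of $\Kb(\prj X)$ with $I$-equivariant inclusion, i.e.\ a tilting oplax functor for $X$. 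On the other hand $F$ restricts to an equivalence $X'\to\calT$ in $\Oplax(I,\kCat)$ (regarding $X'=\prj'X'$ via the Yoneda-type embedding $X'(i)\incl\Kb(\prj X'(i))$), so $\calT$ and $X'$ are equivalent, giving (3).

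The hard part is (c): from a tilting oplax functor $\calT$ for $X$ with $\calT\simeq X'$ in $\Oplax(I,\kCat)$ and $X'$ $\k$-projective, produce an equivalence $\calD(\Mod X)\simeq\calD(\Mod X')$ in $\Oplax(I,\kTri)$. The strategy is to realize each $X'(i)$, via the equivalence with $\calT(i)\subseteq\Kb(\prj X(i))$, as a DG-type ``tilting object'' and lift. Concretely, for each $i$ one gets a $\k$-functor $T_i\colon X'(i)\to\calK(\Prj X(i))$ (factoring through $\calK^-$, landing in $\Kb(\prj X(i))$) satisfying the Toda condition because $\calT(i)$ is tilting; since $X'(i)$ is $\k$-projective, Theorem \ref{keller-lifting} lifts $T_i$ to $B_i\colon X'(i)\to\calC^-(\Prj X(i))$ with a quasi-isomorphism $q_i\colon T_i\To\nu B_i$. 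This $B_i$ induces an adjoint pair between $\Mod X'(i)$ and $\Mod X(i)$ at the level of derived categories, giving a triangle equivalence $\calD(\Mod X'(i))\to\calD(\Mod X(i))$ by the classical Rickard/Keller argument (a tilting subcategory generates, so the induced functor is fully faithful and essentially surjective onto a generating set). The genuinely delicate point is to make these lifts compatible with the $I$-action: for each $a\colon i\to j$ one must produce the coherence $2$-isomorphism $\ps(a)$ relating the two functors $\calD(\Mod X)(a)$ and $\calD(\Mod X')(a)$ through the $B_i$'s, and check the oplax axioms (a$_1$),(a$_2$),(b). This is exactly where the uniqueness-up-to-homotopy clause in Theorem \ref{keller-lifting}(b) is used: the structure $2$-morphisms $\ps(a)$ are forced by uniqueness, and the cocycle-type identities follow because any two lifts of the same functor agree up to unique homotopy, so the required diagrams of natural transformations commute after applying $\nu$. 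I expect assembling this $I$-equivariant structure — i.e.\ upgrading the pointwise equivalences $\calD(\Mod X'(i))\to\calD(\Mod X(i))$ to a $1$-morphism in $\Oplax(I,\kTri)$ and verifying it is an equivalence via Lemma \ref{oplax-eq} — to be the main obstacle, with the per-object lifting being a direct invocation of Keller's theorem.
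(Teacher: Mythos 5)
Your overall strategy coincides with the paper's: (a) is proved exactly as you describe (restriction to compact objects, then Remark \ref{D-form}(2)), and (c) follows the same plan — embed $X'$ into $\calK^-(\Prj X)$ through $\calT$, apply Theorem \ref{keller-lifting}(a) objectwise to get the $B_i$ and $q_i$, produce each $\ps(a)$ from Theorem \ref{keller-lifting}(b), and verify the oplax 1-morphism axioms using the uniqueness/quasi-isomorphism clause. (For the record, applying part (b) of the lifting theorem to get $\ps(a)$ requires checking four small facts — that $\chi(a)x$, $\ovl{X'(a)}\ox_{X'(j)}q_j(x)$ and $q_i(x)\ox_{X(i)}\ovl{X(a)}$ are quasi-isomorphisms and that $E(i)\ox_{X(i)}\ovl{X(a)}$ again satisfies the Toda condition — which your sketch subsumes under "uniqueness" but which is where the $\k$-projectivity and tilting hypotheses actually enter.)

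The one place you genuinely diverge from the paper is (b), and there is a subtlety you should make explicit. Lemma \ref{oplax-subfunctor} requires that $\Kb(\prj X)(a)=\blank\ox_{X(i)}\ovl{X(a)}$ carry the object class of $\calT(i)$ \emph{strictly} into that of $\calT(j)$; the $2$-isomorphisms $\ps(a)$ only give this up to isomorphism, so with $\calT(i)$ taken to be the literal image $\{F(i)H(i)x\}$ the lemma does not apply. Your fix of passing to the \emph{essential} image does repair this (isomorphism-closure makes the object classes strictly stable, and the resulting $\calT$ is still tilting and still equivalent to $X'$), so your route can be made to work. The paper instead keeps the literal image, transports the oplax structure from $X'$ along the restricted equivalences $R_i\colon X'(i)\to\calT(i)$ via Lemma \ref{adjoint-oplax}, and realizes the inclusion as the composite $(F,\ps)\circ(H,\ph^H)\circ(L,\ph^L)$, which is $I$-equivariant with non-identity $2$-cells $\ro(a)$ — this is precisely why the definition of a tilting oplax functor asks only for an $I$-equivariant inclusion rather than a strict oplax subfunctor. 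Your version buys a strict inclusion at the cost of isomorphism-closure; the paper's buys a small $\calT$ at the cost of a non-strict inclusion. Either is acceptable, but as written your claim that ``$X(a)$ maps $\calT(i)$ into $\calT(j)$'' needs the essential-image reading to be literally true.
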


\begin{proof}
(a) Assume the statement (1).
Then there exists an equivalence
$$
(F, \ps)\colon \calD(\Mod X) \to \calD(\Mod X')
$$
in $\Oplax(I, \kTri)$.
Since $F(i)$ sends compact objects
to compact objects, we have
$F(i)(\perf X(i)) \subseteq \perf X'(i)$ for each $i \in I_0$.
This shows that $(F, \ps)$ induces an equivalence
$\perf X \to \perf X'$.
Hence by Remark \ref{D-form}(2) the statement (2) follows.

(b) Assume the statement (2).
Then we have an equivalence
$$
(F, \ps)\colon \Kb(\prj X') \to \Kb(\prj X)
$$
in $\Oplax(I, \kTri)$.
We define an $I$-equivariant morphism
$$
(H, \ph^H)\colon X' \to \Kb(\prj X')
$$
as follows.
For each $i \in I_0$,
let $H(i)\colon X'(i) \to \Kb(\prj X'(i))$
be the Yoneda embedding, namely it is defined by
sending each
morphism $f\colon x \to y$ in $X'(i)$ to the morphism
$$X'(i)(?, x) \xrightarrow{X'(i)(?, f)} X'(i)(?, y)$$
of complexes concentrated in degree zero.
For each $a\colon i \to j$ in $I$,
let
$$\ph^H(a)x\colon
X'(i)(?,x)\ox_{X'(i)}X'(j)(\text{-},X'(a)(?)) \to
X'(j)(\text{-},X'(a)x)
$$
be the canonical isomorphism.
Then $\ph^H(a):=(\ph^H(a)x)_{x\in X'(i)_0}$ is a
natural isomorphism:
$$
\xymatrix{
X'(i) & \Kb(\prj X'(i))\\
X'(j) & \Kb(\prj X'(j)),
\ar^(.4){H(i)} "1,1";"1,2"
\ar^(.4){H(j)} "2,1";"2,2"
\ar_{X'(a)} "1,1";"2,1"
\ar@{=>}_{\ph^H(a)} "1,2";"2,1"
\ar^{\text{-}\ox_{X'(i)}\ovl{X'(a)}} "1,2";"2,2"
}
$$
and it is easy to check that
$(H, \ph^H):= ((H(i))_{i\in I_0}, (\ph^H(a))_{a\in I_1})$
is a morphism in $\Oplax(I, \kCat)$.

Let $i\in I_0$.
We set $\calT(i)$ to be the full subcategory
of $\Kb(\prj X(i))$ consisting of the objects
$F(i)H(i)x$ with $x \in X'(i)_0$, and
$\si(i)\colon \calT(i) \to \Kb(\prj X(i))$ to be
the inclusion functor.
Then $\calT(i)$ turns out to be a tilting subcategory
of $\Kb(\prj X(i))$ because the full subcategory
of $\Kb(\prj X'(i))$ consisting of the objects
$H(i)x$ with $x \in X'(i)_0$ is tilting and $F(i)$
is a triangle equivalence.
Since $F(i)$ is an equivalence,
$F(i)$ restricts to an equivalence
$R_i\colon X'(i) \to \calT(i)$ with a quasi-inverse $L_i$:
$$
\xymatrix{
\Kb(\prj X'(i)) & \Kb(\prj X(i))\\
X'(i) & \calT(i),
\ar^{F(i)}"1,1";"1,2"
\ar@{-->}_{R_i}"2,1";"2,2"
\ar^{H(i)}"2,1";"1,1"
\ar_{\si(i)}"2,2";"1,2"
}
$$
where we can take $L_i$ as a section of $R_i$ to have
$R_iL_i = \id_{\calT(i)}$.

By Lemma \ref{adjoint-oplax} $(\calT(i))_{i\in I_0}$
extends to an oplax functor  $\calT \in \Oplax(I, \kCat)$
and both $(R_i)_{i\in I_0}$ and $(L_i)_{i\in I_0}$
extend to $I$-equivariant morphisms
$(R,\ph^R)\colon X' \to \calT$ and
$(L,\ph^L)\colon \calT \to X'$, respectively.
Then $(R,\ph^R)$ is an equivalence, and hence
$\calT$ and $X'$ are equivalent in $\Oplax(I, \kCat)$.
We extend the family of inclusions $(\si(i))_{i\in I_0}$
to an $I$-equivariant morphism
$(\si, \ro)\colon \calT \to \Kb(\prj X)$.
Since $F(i)H(i)L_i = \si(i)R_iL_i =\si(i)$ for each $i\in I_0$,
we can define $(\si, \ro)\colon \calT \to \Kb(\prj X)$ by
$(\si, \ro):=(F,\ps)\circ (H, \ph^H)\circ (L, \ph^L)$.
Since $\ph^L(a)$, $\ph^H(a)$ and $\ps(a)$ are isomorphisms,
$\ro(a)$ is also an isomorphism for each $a\in I_1$.
Thus $(\si,\ro)$ is an $I$-equivariant morphism, which
shows that $\calT$ is a tilting oplax functor for $X$.

(c) Assume the statement (3).
Then we have an $I$-equivariant morphism
$$
(E,\ph)\colon X'\to \calT \incl \Kb(\prj X) \incl \calK^-(\Prj X)
$$
such that $E(i)\colon X'(i) \to \calK^-(\Prj X(i))$ is
fully faithful for each $i \in I_0$ and that
$\Kb(\prj X(i))(E(i)x, E(i)y[n])=0$
for each $x, y \in X'(i)$ and each $n\ne 0$.
By Theorem \ref{keller-lifting} (a) there exist
a $\k$-functor $B_i\colon X'(i) \to \calC(\Mod X(i))$
and a quasi-isomorphism $q_i \colon E(i) \Longrightarrow B_i$.
For each $a\colon i \to j$ in $I$ we set
$\chi(a)$ to be the composite
$$
E(i)\ox_{X(i)}\ovl{X(a)} \overset{\ro(a)E(i)}\Longrightarrow \calT(a)E(i) \overset{\ph(a)}\Longrightarrow
E(j)X'(a) = \ovl{X'(a)}\ox_{X'(j)}E(j)
$$
of natural isomorphisms.
Then we have the following diagram with solid arrows.
$$
\xymatrix{
E(i)\ox_{X(i)}\ovl{X(a)} & \ovl{X'(a)}\ox_{X'(j)}E(j)\\
\nu B_i \ox_{X(i)}\ovl{X(a)} & \ovl{X'(a)}\ox_{X'(j)}\nu B_j
\ar@{=>}^{\chi(a)} "1,1";"1,2"
\ar@{=>}_{q_i\ox_{X(i)}\ovl{X(a)}}"1,1";"2,1"
\ar@{=>}^{\ovl{X'(a)}\ox_{X'(j)}q_j} "1,2";"2,2"
\ar@{==>} "2,1";"2,2"
}
$$
We show that this is completed to a commutative diagram.
Let $x \in X'(i)_0$.
\setcounter{clm}{0}
\begin{clm}
$\chi(a)x$ is a quasi-isomorphism.
\end{clm}
Indeed, $E(i)x \in \Kb(\prj X(i))$ implies
$E(i)x\ox_{X(i)}\ovl{X(a)}\in \Kb(\prj X(j))$,
and hence $\chi(a)x$ is given by a genuine morphism
and is an isomorphism in $\Kb(\prj X(j))$.
\begin{clm}
$\ovl{X'(a)}\ox_{X'(j)}q_j(x)$ is a quasi-isomorphism.
\end{clm}
This is obvious because $q_j(x)$ is a quasi-isomorphism in
$\Kb(\prj X(j))$ and $\ovl{X'(a)}_{X'(j)}$ is projective.
\begin{clm}
$q_i(x)\ox_{X(i)}\ovl{X(a)}$ is a quasi-isomorphism.
\end{clm}
Indeed, since $q_i(x)$ is a quasi-isomorphism in
$\calK^-(\Prj X(i))$ the mapping cone $C(q_i(x))$ is
acyclic, and hence so is $C(q_i(x))\ox_{X(i)}\ovl{X(a)}$,
from which the claim follows.
\begin{clm}
$E(i)\ox_{X(i)}\ovl{X(a)}$ satisfies the Toda condition.
\end{clm}
Indeed, let $y$ be another object of $X'(i)$ and $n \ne 0$.
Then
$$
\begin{gathered}
\calK^-(\Prj X(j))\left(E(i)x\ox_{X(i)}\ovl{X(a)},
\left(E(i)y \ox_{X(i)}\ovl{X(a)}\right)[n]\right)\\
\iso \calK^-(\Prj X(j))(E(j)(X'(a)x), E(j)(X'(a)y)[n])=0.
\end{gathered}
$$

By Theorem \ref{keller-lifting} (b), it follows from these claims
that there exists a natural transformation
$\ps(a)\colon B_i\ox_{X(i)}\ovl{X(a)}
\Longrightarrow \ovl{X'(a)}\ox_{X'(j)}B_j$
such that $\nu\ps(a)$ completes the commutative diagram above and
$\pi\ps(a)$ is an isomorphism.
Thus we have the following diagram
$$
\xymatrix@C=5pc{
\calD(\Mod X'(i)) &\calD(\Mod X(i))\\
\calD(\Mod X'(j)) & \calD(\Mod X(j)),
\ar^{\ovl{B}(i)}"1,1";"1,2"
\ar_{\blank\Ltimes_{X'(i)}\ovl{X'(a)}}"1,1";"2,1"
\ar^{\blank\Ltimes_{X(i)}\ovl{X(a)}} "1,2";"2,2"
\ar_{\ovl{B}(j)}"2,1";"2,2"
\ar@{=>}_{\ovl{\ps}(a)}"1,2";"2,1"
}
$$
where $\ovl{B}(i):=\blank\Ltimes_{X'(i)}\pi B_i$ are triangle equivalences
because $\pi B_i$ are tilting bimodule complexes,
and $\ovl{\ps}(a):=\blank\Ltimes_{X'(i)}\pi\ps(a)$ are natural isomorphisms.
We set $\ovl{B}:=(\ovl{B}(i))_{i\in I_0}$ and
$\ovl{\ps}:= (\ovl{\ps}(a))_{a\in I_1}$.
It remains to show that the pair
$$
(\ovl{B}, \ovl{\ps}) \colon \calD(\Mod X') \to \calD(\Mod X)
$$
is a 1-morphism in $\Oplax(I, \kTri)$
because when this is proved, $(\ovl{B}, \ovl{\ps})$ becomes an equivalence in $\Oplax(I, \kTri)$
by Proposition \ref{der-eq-criterion},
and we see that $X$ and $X'$ are
derived equivalent.

It is enough to show that the diagram
\begin{equation}
\label{B-a}
\vcenter{
\xymatrix@C=5pc{
\pi B_i \ox_{X(i)}\ovl{X(\id_i)}&\pi B_iX'(\id_i)\\
B_i & B_i
\ar@{=>}^{\ps(\id_i)}"1,1";"1,2"
\ar@{=}"2,1";"2,2"
\ar@{=>}_{\ovl{\et}_iB_i}"1,1";"2,1"
\ar@{=>}^{B_i\et_i}"1,2";"2,2"
}
}
\end{equation}
for each $i \in I_0$ and the diagram
\begin{equation}
\label{B-b}
\vcenter{
\xymatrix{
\pi B_i \ox_{X(i)}\ovl{X(ba)} &
\pi B_i\ox_{X(i)}\ovl{X(a)}\ox_{X(j)}\ovl{X(b)} &
\ovl{X'(a)}\ox_{X'(j)}\pi B_j \ox_{X(j)} \ovl{X(b)}\\
\ovl{X'(ba)}\ox_{X'(k)}\pi B_k &&
\ovl{X'(a)}\ox_{X'(j)}\ovl{X'(b)}\ox_{X'(k)}\pi B_k
\ar@{=>}^-{\pi B_i\ovl{\th}_{b,a}}"1,1";"1,2"
\ar@{=>}^{\sim \ps(a)\sim} "1,2";"1,3"
\ar@{=>}_{\sim \ovl{\th}_{b,a}\sim}"2,1";"2,3"
\ar@{=>}_{\ps(ba)}"1,1";"2,1"
\ar@{=>}^{\sim \ps(b) \sim} "1,3";"2,3"
}
}
\end{equation}
for each $i\ya{a}j \ya{b} k$ in $I$ commute,
where we put $X=(X, \et, \th)$
and $X'=(X', \et', \th')$, and $\ovl{\et}_i$,
$\ovl{\th}_{b,a}$ denote the morphisms
induced by $\et_i$, $\th_{b,a}$, respectively.

The commutativity of the diagram \eqref{B-a} follows from
the following commutative diagram by using the fact that
$q_i\ox_{X(i)}\ovl{X(\id_i)}$ is a quasi-isomorphism:
$$
\xymatrix{
\nu B_i\ox_{X(i)}\ovl{X(\id_i)}&&& \nu B_iX'(\id_i)\\
&T(i)\ox_{X(i)}\ovl{X(\id_i)}& T(i)X'(\id_i)\\
&T(i) &T(i)\\
\nu B_i &&& \nu B_i.
\ar@{=>}^{\nu\ps(\id_i)}"1,1";"1,4"
\ar@{=>}"2,2";"2,3"
\ar@{=}"3,2";"3,3"
\ar@{=}"4,1";"4,4"
\ar@{=>}"1,1";"4,1"
\ar@{=>}"2,2";"3,2"
\ar@{=>}^{T(i)\et_i}"2,3";"3,3"
\ar@{=>}^{\nu B_i\et_i}"1,4";"4,4"
\ar@{=>}_{q_i\ox_{X(i)}\ovl{X(\id_i)}}"2,2";"1,1"
\ar@{=>}_{q_iX'(\id_i)}"2,3";"1,4"
\ar@{=>}^{q_i}"3,2";"4,1"
\ar@{=>}_{q_i}"3,3";"4,4"
}
$$
The commutativity of the diagram \eqref{B-b} follows from
the following commutative diagram by using the fact that
$T(ba)q_i$ is a quasi-isomorphism:
$$
\xymatrix{
T(ba)\nu B_i &&T(b)T(a)\nu B_i && T(b)\nu B_j X'(a)\\
&T(ba)E(i)& T(b)T(a)E(i) & T(b)E(j)X'(a)\\
&E(k)X'(ba) && E(k)X'(b)X'(a)\\
\nu B_kX'(ba)&&&&\nu B_kX'(b)X'(a),
\ar@{=>}^{\ovl{\th}_{b,a}\nu B_i}"1,1";"1,3"
\ar@{=>}^{T(b)\ps(a)}"1,3";"1,5"
\ar@{=>}_{\ovl{\th}_{b,a}E(i)}"2,2";"2,3"
\ar@{=>}_{T(b)\chi(a)}"2,3";"2,4"
\ar@{=>}_{E(k)\th'_{b,a}}"3,2";"3,4"
\ar@{=>}^{\nu B_k \th'_{b,a}}"4,1";"4,5"
\ar@{=>}_{\ps(ba)}"1,1";"4,1"
\ar@{=>}_{\chi(ba)}"2,2";"3,2"
\ar@{=>}_{T(b)T(a)q_i}"2,3";"1,3"
\ar@{=>}^{\chi(b)X'(a)}"2,4";"3,4"
\ar@{=>}_{\ps(b)X'(a)}"1,5";"4,5"
\ar@{=>}_{T(ba)q_i}"2,2";"1,1"
\ar@{=>}^{T(b)q_jX'(a)}"2,4";"1,5"
\ar@{=>}^{q_kX'(ba)}"3,2";"4,1"
\ar@{=>}_{q_kX'(b)X'(a)}"3,4";"4,5"
}
$$
where we regard
$T(a)=\blank\ox_{X(i)}\ovl{X(a)}\colon T(i)\to T(j)$.
\end{proof}

\begin{dfn}
Regard a group $G$ as a category with a unique object $*$.
Then a {\em $\k$-category with a pseudo-action} of $G$ is a pair $(\calC, X)$
of a $\k$-category $\calC$ and a pseudo-functor $X \colon G \to \kCat$ with $\calC = X(*)$.
\end{dfn}

As a special case of  Theorem \ref{mainthm1} we obtain the following.

\begin{cor}
Let $G$ be a group, $(\calC, X)$ and $(\calC', X')$ $\k$-categories with pseudo-actions of $G$.
Assume that $\k$ is a field.
Then the following are equivalent.
\begin{enumerate}
\item $(\calC, X)$ and $(\calC', X')$ are derived equivalent.
\item There exists a $G$-equivariant tilting subcategory $\calT$ with a pseudo-action of $G$
in $\Kb(\prj(\calC, X))$
such that $(\calC', X')$ is equivalent to $\calT$ in the $2$-category $\Oplax(G, \kCat)$.
\end{enumerate}
\end{cor}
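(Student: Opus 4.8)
The plan is to obtain the statement as the one-object specialization of Theorem~\ref{mainthm1}, taking $I = G$ (the group $G$ viewed as a category with a single object $*$ and morphism set $G$).

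First I would record the only place where the field hypothesis is used: over a field $\k$ every $\k$-module is free, hence projective, so every small $\k$-category is $\k$-projective; in particular $X'$ (equivalently $\calC' = X'(*)$) is $\k$-projective. Hence all three implications (a), (b), (c) of Theorem~\ref{mainthm1} apply and its conditions (1), (2), (3) become pairwise equivalent.

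Next I would translate those conditions into the language of pseudo-actions. A pseudo-action $X\colon G \to \kCat$ is in particular an oplax functor, so condition (1) of the corollary is, by the definition of derived equivalence of oplax functors, identical with condition (1) of Theorem~\ref{mainthm1}. For the remaining identification I would unwind ``tilting oplax functor for $X$'' in the one-object case: such a datum amounts to a single full subcategory $\calT(*) \subseteq \Kb(\prj \calC)$ that is a tilting subcategory of $\Kb(\prj\calC)$, equipped with an oplax-subfunctor structure and a $G$-equivariant inclusion $(\si,\ro)\colon \calT \incl \Kb(\prj X)$; the $G$-equivariance is precisely the condition that all $\ro(a)$, $a \in G$, be isomorphisms. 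Here I would check that $\Kb(\prj X)$ is itself a pseudofunctor whenever $X$ is: each $X(a)$ ($a\in G$) is an equivalence of $\k$-categories, with quasi-inverse furnished by $X(a\inv)$ via the structure isomorphisms $\th$, $\et$ of the pseudofunctor $X$, so the left adjoints $(-)\ox_{X(i)}\ovl{X(a)}$ used to build $\Mod X$ are adjoint equivalences; therefore $\Mod X$ has invertible structure $2$-morphisms, and this passes to $\calK(\Mod X)$, $\calD(\Mod X)$ and the oplax subfunctor $\Kb(\prj X)$. Since $\calT$ is a \emph{full} oplax subfunctor of the pseudofunctor $\Kb(\prj X)$, its structure $2$-morphisms are restrictions of isomorphisms between objects lying in $\calT$, hence themselves isomorphisms; thus $\calT$ automatically carries a pseudo-action of $G$. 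This shows that condition (3) of Theorem~\ref{mainthm1} is exactly condition (2) of the corollary, while condition (2) of Theorem~\ref{mainthm1} is an intermediate statement that comes along for free.

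Combining, we obtain (1) $\Leftrightarrow$ Theorem~\ref{mainthm1}(1) $\Leftrightarrow$ Theorem~\ref{mainthm1}(3) $\Leftrightarrow$ (2), which is the assertion. No new idea beyond Theorem~\ref{mainthm1} enters; the only thing that genuinely requires verification is the bookkeeping just described---that a group pseudo-action induces pseudofunctors on $\Mod X$ and $\Kb(\prj X)$, and that the one-object case of ``tilting oplax functor for $X$'' coincides with ``$G$-equivariant tilting subcategory with a pseudo-action of $G$''---and I expect this to be the main (though entirely routine) obstacle, resolved directly from the constructions of Sections~3 and~4 together with Lemma~\ref{adjoint-oplax}.
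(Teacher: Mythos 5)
Your proposal is correct and matches the paper's (implicit) argument: the paper derives this corollary directly as the one-object specialization $I=G$ of Theorem~\ref{mainthm1}, with the field hypothesis supplying $\k$-projectivity of $X'$ so that all three conditions of the theorem become equivalent. Your extra bookkeeping — that a pseudo-action induces pseudofunctor structures on $\Mod X$, $\calK(\Mod X)$ and $\Kb(\prj X)$ (since the restriction functors are equivalences and the adjunctions are adjoint equivalences), so that a full tilting oplax subfunctor automatically carries a pseudo-action — is exactly the routine verification the paper leaves unstated.
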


\begin{exm}
Consider $\bbZ$ as an additive group.
For a $\k$-algebra $B$ and an automorphism $\la$ of $B$ denote by $(B, \la)$ the category $B$
with a $\bbZ$-action defined by sending $1$ to $\la$, and by $\hat{B}$ the repetitive category of $B$.
If $A$ and $A'$ are derived equivalent algebras, then the categories
$(\hat{A}, \nu^n)$ and $(\hat{A'}, \nu'^n)$ with $\bbZ$-actions
are derived equivalent for all $n \in \bbN$, where $\nu$ (resp.\ $\nu'$) are
the Nakayama automorphism of $\hat{A}$ (resp.\ $\hat{A'}$).
\end{exm}

By applying Theorem \ref{mainthm1} to the free category of the quiver $1 \ya{a} 2$
we obtain the following.

\begin{cor}
Let $\la \colon A \to B$ and $\la' \colon A' \to B'$ be morphisms of $\k$-algebras,
by which we regard $B$, $B'$ as a left $A$-module and a left $A'$-module, respectively.
Assume that $\k$ is a field.
Then the following are equivalent.
\begin{enumerate}
\item There exist equivalences $F$, $G$ of triangulated categories such that the following diagram is commutative up to natural isomorphisms
$$
\xymatrix{
\calD(\Mod A) & \calD(\Mod A')\\
\calD(\Mod B) & \calD(\Mod B').
\ar^{F}"1,1";"1,2"
\ar_{G}"2,1";"2,2"
\ar_{\blank\Ltimes B}"1,1";"2,1"
\ar^{\blank\Ltimes B'}"1,2";"2,2"
}
$$
\item There exist a tilting complex $T$ for $A$ with $T \otimes_A B$ a tilting complex for $B$,
$\k$-algebra isomorphisms $\al$, $\be$ and a $\k$-algebra morphism $\mu$
such that the following diagram is commutative up to natural isomorphism
$$
\xymatrix{
A' & \End_{\Kb(\prj A)}(T) & \Kb(\prj A)\\
B' & \End_{\Kb(\prj B)}(T\otimes_AB) & \Kb(\prj B).
\ar^-{\al}"1,1";"1,2"
\ar^--{\be}"2,1";"2,2"
\ar@{^{(}->}"1,2";"1,3"
\ar@{^{(}->}"2,2";"2,3"
\ar_{\la'}"1,1";"2,1"
\ar^{\mu}"1,2";"2,2"
\ar^{\blank\otimes_AB}"1,3";"2,3"
}
$$
\end{enumerate}

\end{cor}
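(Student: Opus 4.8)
The plan is to apply Theorem~\ref{mainthm1} to the free $\k$-category $I$ on the quiver $1 \ya{a} 2$, which has objects $1,2$ and morphisms $\id_1,\id_2,a$, and then to translate its three conditions into the two conditions of the corollary. Since the only composites in $I$ involve an identity, an oplax functor $I \to \bfC$ is simply a single $1$-morphism $\calC_1 \to \calC_2$ in $\bfC$, a $1$-morphism in $\Oplax(I,\bfC)$ is a square in $\bfC$ commuting up to a chosen $2$-morphism, and by Lemma~\ref{oplax-eq} such a $1$-morphism is an equivalence precisely when both vertical components are equivalences and the connecting $2$-morphism is invertible. Regard $A$ and $B$ as one-object $\k$-categories and let $X\colon I \to \kCat$ be the functor with $X(1)=A$, $X(2)=B$, $X(a)=\la$; define $X'$ from $\la'$ analogously. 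Since $\k$ is a field, every $\k$-category is $\k$-projective, so in particular $X'$ is, and therefore Theorem~\ref{mainthm1}(a),(b),(c) together yield that conditions (1),(2),(3) of that theorem are all equivalent.

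Next I would fix the dictionary between the objects appearing in Theorem~\ref{mainthm1} and ordinary homological algebra. The bimodule $\ovl{X(a)}$ is $B$ regarded as an $A$-$B$-bimodule through $\la$, so by Remark~\ref{D-form}(1) the oplax functor $\calD(\Mod X)$ is the functor $\blank\Lox_A B\colon \calD(\Mod A) \to \calD(\Mod B)$, and by the description in Example~\ref{KMod} the oplax functor $\Kb(\prj X)$ is $\blank\ox_A B$ restricted to $\Kb(\prj A)$. Consequently, by Proposition~\ref{der-eq-criterion}, the statement that $X$ and $X'$ are derived equivalent unwinds verbatim into condition~(1) of the corollary: triangle equivalences $F\colon \calD(\Mod A) \to \calD(\Mod A')$ and $G\colon \calD(\Mod B) \to \calD(\Mod B')$ with $G\circ(\blank\Lox_A B) \cong (\blank\Lox_{A'} B')\circ F$.

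It then remains to match Theorem~\ref{mainthm1}(3) with the corollary's condition~(2). A tilting oplax functor $\calT$ for $X$ consists of tilting subcategories $\calT(1)\subseteq\Kb(\prj A)$ and $\calT(2)\subseteq\Kb(\prj B)$ with an $I$-equivariant inclusion into $\Kb(\prj X)$ — so $\calT(a)$ is $\blank\ox_A B$ and carries $\calT(1)$ into $\calT(2)$ — while an equivalence $\calT\simeq X'$ in $\Oplax(I,\kCat)$ supplies equivalences $A'\to\calT(1)$ and $B'\to\calT(2)$ whose $I$-equivariance says the square built from $\la'$ and $\calT(a)$ commutes up to isomorphism. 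Because $A'$ and $B'$ have a single object, each such equivalence forces all objects of $\calT(i)$ to become isomorphic in $\Kb(\prj X(i))$, so I may replace $\calT(1)$ by its full subcategory on a single object $T$ and $\calT(2)$ by its full subcategory on a single object, which by $I$-equivariance is isomorphic to $\calT(a)(T)=T\ox_A B$, without destroying the tilting property (the $\Hom$-vanishing is inherited and the thick closure is unchanged) or $I$-equivariance. A single-object tilting subcategory of $\Kb(\prj R)$ is exactly a tilting complex for $R$ (cf.\ Theorem~\ref{Rickard-thm}), so $T$ is a tilting complex for $A$ with $T\ox_A B$ a tilting complex for $B$; the two equivalences become $\k$-algebra isomorphisms $\al\colon A'\to\End_{\Kb(\prj A)}(T)$ and $\be\colon B'\to\End_{\Kb(\prj B)}(T\ox_A B)$; the functor $\mu:=\calT(a)$ is the restriction of $\blank\ox_A B$, making the right-hand square of the corollary's diagram commute tautologically; and the $I$-equivariance of $\calT\simeq X'$ is the commutativity up to isomorphism of its left-hand square. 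Reading these identifications back and forth yields the equivalence of (1) and (2).

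The step I expect to be the main obstacle is precisely this reduction to single-object tilting subcategories: one must argue carefully, using that $A'$ and $B'$ are one-object categories, that an equivalence $\calT(i)\simeq A'$ collapses $\calT(i)$ up to isomorphism of objects, and then check that the shrunk $\calT$ is still a tilting oplax functor for $X$ and still equivariantly equivalent to $X'$. Everything else is bookkeeping: unwinding the $2$-categorical language of $\Oplax(I,-)$ through Proposition~\ref{der-eq-criterion} and Lemma~\ref{oplax-eq} into commuting squares of triangulated functors and of algebra homomorphisms, together with the single-object form of Rickard's theorem.
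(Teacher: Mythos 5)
Your proposal is correct and is exactly the paper's intended argument: the paper derives this corollary with no further proof beyond the remark that one applies Theorem~\ref{mainthm1} to the free category of the quiver $1 \ya{a} 2$, and your translation of conditions (1) and (3) of that theorem into conditions (1) and (2) of the corollary (including the reduction of the tilting oplax subfunctor to a single tilting complex $T$ with $T\otimes_A B$, using that $A'$ and $B'$ are one-object categories and that $\k$ being a field makes $X'$ $\k$-projective) is the right bookkeeping.
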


\section{Appendix: A categorical version of Keller's lifting theorem}
\label{appx}

\newcommand{\cb}{{\mathcal B}}

\newcommand{\HOM}{\opname{Hom^\bullet}}
\newcommand{\cc}{{\mathcal C}}
\newcommand{\ck}{{\mathcal K}}
\newcommand{\ce}{{\mathcal E}}
\newcommand{\ca}{{\mathcal A}}
\newcommand{\ko}{\: , \;}
\newcommand{\fk}{\k}
\newcommand{\bp}{\mathbf{p}}
\newcommand{\ol}{\overline}
\newcommand{\cd}{{\mathcal D}}

\newcommand{\ra}{\rightarrow}

\newcommand{\opname}[1]{\operatorname{\mathsf{#1}}}

\renewcommand{\Hom}{\opname{Hom}}

\newcommand{\opnamestar}[1]{\operatorname*{\mathsf{#1}}}
\renewcommand{\mod}{\opname{mod}\nolimits}
\renewcommand{\Mod}{\opname{Mod}\nolimits}
\newcommand{\Add}{\opname{Add}\nolimits}
\newcommand{\Sum}{{\mbox{Sum}}}
\newcommand{\Gen}{{\mbox{Gen}}}
\newcommand{\Tria}{{\mbox{Tria}}}
\newcommand{\Susp}{{\mbox{Susp}}}
\newcommand{\aisle}{{\mbox{aisle}}}
\newcommand{\cell}{{\mbox{cell}}}
\newcommand{\proj}{\opname{proj}\nolimits}
\newcommand{\coh}{\opname{coh}\nolimits}

\newcommand{\R}{\mathbf{R}}
\renewcommand{\L}{\mathbf{L}}
\newcommand{\ten}{\otimes}
\newcommand{\lten}{\otimes^{\mathsf{L}}}
\newcommand{\tp}[1]{^{\ten #1}}
\newcommand{\coker}{\opname{coker}\nolimits}
\newcommand{\im}{\opname{im}\nolimits}
\renewcommand{\ker}{\opname{ker}\nolimits}
\renewcommand{\colim}{\opname{colim}\nolimits}
\newcommand{\lid}{\varinjlim}
\newcommand{\lii}{\varprojlim}
\newcommand{\Mcolim}{\opname{Mcolim}}
\newcommand{\Mlim}{\opname{Mlim}}
\renewcommand{\can}{\opname{can}}
\renewcommand{\incl}{\opname{incl}}
\renewcommand{\Iso}{\opname{Iso}}
\newcommand{\Quot}{\opname{Quot}}

%
%
\newcommand{\cca}{\cc\ca}
\newcommand{\ccb}{\cc^b}
\newcommand{\acb}{\ca c^b}
\newcommand{\cone}{\opname{Cone}\nolimits}
\newcommand{\con}{\opname{con}\nolimits}
\newcommand{\vecbundle}{\opname{vec}\nolimits}
\newcommand{\spec}{\opname{Spec}\nolimits}

\newcommand{\uni}{\mathbf{U}}
\newcommand{\ex}{\opname{Ex}}
\newcommand{\adm}{\opname{Adm}}
\newcommand{\sat}{\opname{Sat}}
\newcommand{\filt}{\opname{filt}}
\newcommand{\tria}{\opname{tria}}
\newcommand{\tw}{\opname{tw}}
\newcommand{\twst}{\opname{twst}}

\newcommand{\mix}{\cm ix\,}
\newcommand{\dmix}{\cd\cm ix\,}
\newcommand{\mormix}{\cm or \cm ix\,}
\newcommand{\dmormix}{\cd\mormix}
\renewcommand{\Fun}{\opname{Fun}}
\newcommand{\fun}{\opname{fun}}
\renewcommand{\com}{\cc}

\newcommand{\Dif}{\opname{Dif}}
\newcommand{\dgfree}{\opname{dgfree}}
\newcommand{\Rep}{\opname{Rep}}
\renewcommand{\rep}{\opname{rep}}
\newcommand{\fil}{\opname{Fil}}
\newcommand{\per}{\opname{per}}

\newcommand{\hca}{\opname{Hca}}

\newcommand{\tr}{\opname{tr}}
\newcommand{\tot}{\opname{tot}}
\newcommand{\Tot}{\opname{Tot}}
\newcommand{\hTot}{\Hat{\Tot}}
%
%
\newcommand{\centeps}[1]{\begin{array}{c} \epsfbox{#1} \end{array}}
\newcommand{\centdoseps}[2]{\begin{array}{cc}\epsfbox{#1} & \epsfbox{#2}\end{array}}
\setlength{\unitlength}{0.25cm}
\newcommand{\lblarge}[3]{\put(#1,#2){\makebox(3,2){$ #3 $}}}
\newcommand{\lb}[3]{\put(#1,#2){\makebox(2,2){$ #3 $}}}
\newcommand{\lbt}[3]{\put(#1,#2){\makebox(0,0)[t]{$ #3 $}}}
\newcommand{\lbb}[3]{\put(#1,#2){\makebox(0,0)[b]{$ #3 $}}}
\newcommand{\lbtl}[3]{\put(#1,#2){\makebox(0,0)[tl]{$ #3 $}}}
\newcommand{\lbtr}[3]{\put(#1,#2){\makebox(0,0)[tr]{$ #3 $}}}
\newcommand{\lbbl}[3]{\put(#1,#2){\makebox(0,0)[bl]{$ #3 $}}}
\newcommand{\lbbr}[3]{\put(#1,#2){\makebox(0,0)[br]{$ #3 $}}}
\newcommand{\picarrow}[5]{\put(#1,#2){\vector(#3,#4){#5}}}
\newcommand{\lin}[5]{\put(#1,#2){\line(#3,#4){#5}}}
\newcommand{\lbold}[3]{\put(#1,#2){\makebox(1,0.5){$ #3 $}}}
\renewcommand{\dot}[2]{\put(#1,#2){\circle*{0.5}}}

\newcommand{\comment}[1]{}

\hyphenation{Grothen-dieck}

\def\Z{\bbZ}

We prove Theorem~\ref{keller-lifting}. We will prove existence of the lifting following section~9 of
\cite{Ke1}.  Uniqueness follows easily.
We use the notations of the main body of the paper. Moreover,
for two complexes $L$ and $M$ of $\cb$-modules, we write
$\HOM_\cb(L,M)$ for the complex whose $n$th component is formed
by the morphisms $f:L \to M$ homogeneous of degree $n$ between the
$\Z$-graded objects underlying $L$ and $M$ and whose differential
takes $f$ to $d(f)=d_M \circ f - (-1)^n f\circ d_L$. The class of
complexes of $\cb$-modules endowed with the assignment
\[
(L,M) \mapsto \HOM_\cb(L,M)
\]
naturally becomes a dg category \cite{Ke1} \cite{Keller06d} denoted
by $\cc_{dg}(\Mod\cb)$ and we have
\[
Z^0 \cc_{dg}(\Mod \cb) = \cc(\Mod\cb) \quad\mbox{and} \quad
H^0 \cc_{dg}(\Mod \cb) = \ck(\Mod\cb).
\]

\subsection{Existence}
Let $\ce$ be the dg endomorphism category of $T$: Its objects are those
of $\ca$ and for two objects $x$, $y$ of $\ca$, we put
\[
\ce(x,y) = \HOM_\cb(T(x),T(y)).
\]
Thus, for $n\in\Z$, we have
\[
H^n \ce(x,y) = \Hom_{\ck(\Prj\cb)}(T(x),T(y)[n])
\]
and this group vanishes for $n<0$ by the Toda condition. Let $\tau_{\leq 0}\ce$
denote the dg subcategory of $\ce$ with the same objects and with the
morphism complexes
\[
(\tau_{\leq 0}\ce)(x,y) = \tau_{\leq 0}(\ce(x,y)).
\]
By the Toda condition, the projection $\tau_{\leq 0}\ce \to H^0\ce$ is a
quasi-isomorphism (i.e.\ a dg functor which induces a bijection on the
objects and quasi-isomorphisms in the morphism complexes). Since we
have
\[
(H^0\ce)(x,y) = \Hom_{\ck(\Prj \cb)}(T(x), T(y)) \ko
\]
the functor $T$ yields a functor $F: \ca \to H^0\ce$ which is the identity
on the objects and given by $T$ on the morphisms. Thus, we obtain
a chain of dg functors
\[
\xymatrix{
\ca \ar[r]^-F & H^0\ce & \tau_{\leq 0} \ce\mbox{ } \ar@{->>}[l]_{qis} \ar@{>->}[r] & \ce \ar[r] &
\cc_{dg}(\Prj \cb).
}
\]
To `invert' the quasi-isomorphism, we now temporarily pass from functors
to bimodules: Let $X_1$ be the $\ca^{op}\ten_\fk H^0\ce$--module given by
\[
X_1(x,y) = (H^0\ce)(x,Fy) .
\]
Since $\ca$ is projective over $\fk$, the induced dg functor
\[
\xymatrix{ \ca^{op} \ten_\fk H^0\ce & \ca^{op} \ten_\fk \tau_{\leq 0} \ce \ar[l] }
\]
is a quasi-isomorphism. The restriction of $X_1$ along this quasi-isomorphism
is still denoted by $X_1$. Now let us denote by $T'$ the $\ce^{op}\ten_\fk \cb$--module
given by
\[
T'(?,y) = T(y).
\]
We put
\[
X_2 = X_1 \lten_{\tau_{\leq 0} \ce} T' = (\bp X_1) \ten_{\tau_{\leq 0} \ce} T' \ko
\]
where $\bp X_1$ is a {\em cofibrant resolution} (\cite[Section 2.12]{Ke-Ya}) of the
$\ca^{op}\ten_\fk \tau_{\leq 0}\ce$--module $X_1$. Notice that $X_1$ is
right bounded so that $\bp X_1$ may be chosen right bounded. Since
$T'$ is also right bounded, the tensor product $X_1$ is right bounded.
Moreover, since $\ca(x,y)$ is $\fk$-projective for all objects $x$, $y$ of $\ca$,
cofibrant modules over $\ca^{op}\ten_\fk \tau_{\leq 0}\ce$ tensored
by right cofibrant $\tau_{\leq 0}\ce \ten_\fk \cb$-modules are
cofibrant over $\cb$. Thus, for each object $x$ of $\ca$, the complex
$X_2(?,x)$ is a right bounded complex of projective $\cb$-modules.
We define $B: \ca \to \cc(\Prj \cb)$ by
\[
B(x) = X_2(?,x).
\]
Now let us construct the natural transformation $q: T \to \nu B$ of
functors from $\ca$ to $\ck(\Prj \cb)$. For an object $x$ of $\ca$, the
object $X_1(?,x)$ is isomorphic to $(H^0\ce)(?,x)$ and we have
two quasi-isomorphisms with cofibrant $\tau_{\leq 0}\ce$--modules
given by
\[
\xymatrix{
 (\tau_{\leq 0} \ce)(?,x)  \ar[r]  & (H^0\ce)(?,x) & (\bp X_1)(?,x) \ar[l] }
\]
Thus, there is a unique morphism $ (\tau_{\leq 0} \ce)(?,x)\to (\bp X_1)(?,x)$
in the homotopy category of
$\tau_{\leq 0}\ce$--modules which makes the following triangle commutative
\[
\xymatrix{  (\tau_{\leq 0} \ce)(?,x) \ar[rd] \ar[rr] &  &  (\bp X_1)(?,x) \ar[ld] \\
 & (H^0\ce)(?,x) & }
\]
By tensoring this morphism with $T'$ we obtain a morphism
\[
T(x) = (\tau_{\leq 0} \ce)(?,x) \ten_{\tau_{\leq 0} \ce} T' \to
(\bp X_1(?,x))\ten_{\tau_{\leq 0}\ce} T' = X_2(?,x) = \nu B(x) \ko
\]
which we use to define $q_x$. It is straightforward to check that the
$q_x$ do yield a natural transformation $q: T \to \nu B$.

\subsection{Uniqueness} Suppose we are given a morphism
$q': T \to \nu B'$ as in part (b) of Theorem \ref{keller-lifting}. Then
clearly there exists a unique natural transformation
$\ol{p}: \pi B \to \pi B'$ of functors from $\ca$ to
$\calD(\Mod\cb)$ such that $Qq'=\ol{p}\circ Qq$.
Since for each $x \in \mathcal{A}$,
$\ovl{p}x: B(x) \to B'(x)$
is in $\calD^{-}(\Prj \mathcal{B})$,
$\ol{p}$ lifts
to a morphism $p: B \to B'$ of functors  from
$\ca$ to $\cc^-(\Prj \cb)$.
Thus we have $\pi p = \ovl{p}$.
Then $Qq'=Q(\nu p \circ q)$ and we have $q'=\nu p \circ q$
because $Q$ is fully faithful on $\calK^-(\Prj \cb)$.
If $q'=\nu p' \circ q$ for some $p': B \to B'$,
then $Q\nu p = Qq'(Qq)\inv = Q\nu p'$ shows $\nu p = \nu p'$
by the same reason. \qed


\begin{thebibliography}{99}

\bibitem{Asa97} Asashiba, H.:
{\it A covering technique for derived equivalence},
J.\ Algebra., {\bf 191} (1997), 382--415.

\bibitem{Asa99} \bysame:
{\it The derived equivalence classification of representation-finite
selfinjective algebras}, J.\ Algebra, {\bf 214} (1999), 182--221.

\bibitem{Asa02} \bysame:
{\it Derived and stable equivalence classification of twisted multifold extensions of piecewise hereditary algebras of tree type},
J.\ Algebra  {\bf 249} (2002), 345--376.

\bibitem{Asa11} \bysame :
{\it A generalization of Gabriel's Galois covering functors and derived equivalences}, J.\ Algebra {\bf 334} (2011), 109--149. 

\bibitem{Asa} \bysame :
{\it Gluing derived equivalences together}, preprint, arXiv:1204.0196.

\bibitem{Gab} Gabriel, P.:
{\it The universal cover of a representation-finite algebra},
in: Lecture Notes in Math., vol. {\bf 903}, Springer-Verlag,
Berlin/New York, 1981, 68--105.

\bibitem{Groth} Grothendieck, A.:
{Rev\^etements \'etales et groupe fondamental},
Springer-Verlag, Berlin, 1971.
S\'eminaire de G\'eom\'etrie Alg\'ebrique du Bois Marie 1960--1961 (SGA 1),
Dirig\'e par Alexandre Grothendieck.
Augment\'e de deux expos\'es de M. Raynaud, Lecture Notes in Mathematics, Vol. {\bf 224}.

\bibitem{Ke1} Keller, B.:
{\it Deriving DG categories},
Ann. scient. \'{E}c. Norm. Sup., 4$^e$ s\'{e}rie,
t. {\bf 27}, 1994, 63--102.

\bibitem{Ke2} \bysame :
{\it Bimodule complexes via strong homotopy actions},
Algebras and Representation theory, Vol. {\bf 3}, 2000, 357-376.
\newblock Special issue dedicated to Klaus Roggenkamp on the occasion of his
  60th birthday.

\bibitem{Keller06d}
\bysame :
\newblock On differential graded categories.
\newblock In {\em International Congress of Mathematicians. Vol. II}, pages
  151--190. Eur. Math. Soc., Z\"urich, 2006.
  
 \bibitem{Ke-Ya}
 Keller, B.\ and Yang, D.:
 {\em Derived equivalences from mutations of quivers with potential},
Advances in Math.\ {\bf 226} (2011) 2118--2168.

\bibitem{Rick} Rickard, J.:
{\it Morita theory for derived categories}, J. London Math. Soc., {\bf 39}
1989, 436--456.

\bibitem{Str72}
Street, R.:
{\em Two constructions on lax functors},
Cahiers Topologie G{\'e}om.\ Diff{\'e}rentielle , {\bf 13} (1972), 217--264.

\bibitem{Tam} Tamaki, D.:
{\it Grothendieck constructions for enriched categories}, preprint, arXiv:0907.0061.

\end{thebibliography}
\end{document}